\newtheorem{theorem}{Theorem}[section]
\newtheorem{lemma}{Lemma}[section]
\newtheorem{assumption}[theorem]{Assumption}
\newtheorem{remark}[theorem]{Remark}
\numberwithin{equation}{section}
\begin{document}

\title[Inverse random potential scattering]{Inverse random potential scattering for the polyharmonic wave equation using far-field patterns}

\author{Jianliang Li}
\address{Key Laboratory of Computing and Stochastic Mathematics, School of Mathematics and Statistics, Hunan Normal University, Changsha, Hunan 410081, China}
\email{lijianliang@hunnu.edu.cn, lijl@amss.ac.cn}

\author{Peijun Li}
\address{State Key Laboratory of Mathematical Sciences, Academy of Mathematics and Systems Science, Chinese Academy of Sciences, Beijing 100190, China, and School of Mathematical Sciences, University of Chinese Academy of Sciences, Beijing 100049, China}
\email{lipeijun@lsec.cc.ac.cn}  

\author{Xu Wang}
\address{State Key Laboratory of Mathematical Sciences, Academy of Mathematics and Systems Science, Chinese Academy of Sciences, Beijing 100190, China, and School of Mathematical Sciences, University of Chinese Academy of Sciences, Beijing 100049, China} 
\email{wangxu@lsec.cc.ac.cn}

\author{Guanlin Yang}
\address{State Key Laboratory of Mathematical Sciences, Academy of Mathematics and Systems Science, Chinese Academy of Sciences, Beijing 100190, China, and School of Mathematical Sciences, University of Chinese Academy of Sciences, Beijing 100049, China} 
\email{yangguanlin@lsec.cc.ac.cn}

\thanks{The work of JL is supported by the NNSF of China (12171057) and the Foundation Sciences of Hunan Province (2024JC0006). The work of PL is supported by the National Key R\&D Program of China (2024YFA1012300).
The work of XW is supported by the NNSF of China (12288201), the National Key R\&D Program of China (2024YFA1012300 and 2024YFA1015900), and the CAS Project for Young Scientists in Basic Research (YSBR-087).}

\subjclass[2010]{35R30, 35R60}

\keywords{inverse random potential scattering, polyharmonic wave equation, generalized microlocally isotropic Gaussian random field, pseudo-differential operator, far-field pattern, uniqueness}

\begin{abstract}
This paper addresses the inverse scattering problem of a random potential associated with the polyharmonic wave equation in two and three dimensions. The random potential is represented as a centered complex-valued generalized microlocally isotropic Gaussian random field, where its covariance and relation operators are characterized as conventional pseudo-differential operators. Regarding the direct scattering problem, the well-posedness is established in the distributional sense for sufficiently large wavenumbers through analysis of the corresponding Lippmann--Schwinger integral equation. 
Furthermore, in the context of the inverse scattering problem, the uniqueness is attained in recovering the microlocal strengths of both the covariance and relation operators of the random potential. Notably, this is accomplished with only a single realization of the backscattering far-field patterns averaged over the high-frequency band.
\end{abstract}

\maketitle

\section{Introduction}

The polyharmonic wave equation is a higher order generalization of the classical second order wave equation. It involves the application of the polyharmonic operator, which extends the Laplacian operator to higher orders. This equation is significant in diverse areas of physics and engineering, such as in elasticity theory, fluid dynamics, and quantum mechanics. For example, the biharmonic wave equation is central to the theory of thin elastic plates, describing the deflection of a plate under load \cite{C-97}. Higher order polyharmonic equations are used in advanced models of beams and shells, where they help analyze bending and deformation of complex structures \cite{MM-IM14}. We refer to the monograph \cite{GGW-10} for a comprehensive account of boundary value problems involving polyharmonic operators. Stemming from the electrical impedance tomography problem posed by Calder\'{o}n in 1980, inverse boundary value problems for polyharmonic operators have recently received considerable attention \cite{AI-IPI19, BG-JFAA19, BKS-SIMA23, KLU14, KU-JST16}. The focus is on examining the uniqueness of lower order coefficients from boundary measurements given by the Dirichlet-to-Neumann map. These problems are one of the active research directions in inverse problem theory \cite{LL-JDE23, TH17, TS18}.

In practical applications, systems are often perturbed by uncertainties caused by multiple factors, such as the unpredictability of the environment or incomplete knowledge of the system. Consequently, it is necessary to introduce random parameters into the mathematical modeling \cite{FGPL-07, I-78}. Compared with their deterministic counterparts, both direct and inverse problems become more complex in the presence of randomness. On the one hand, random parameters are sometimes too rough to be defined point-wise, necessitating the establishment of well-posedness for direct problems with low regularity parameters. On the other hand, the randomness of the parameters makes it meaningless to reconstruct the unknown parameter path-wise; instead, it is more reasonable to determine appropriate statistics of the parameter based on the statistics of the measurements.

In this paper, we study the stochastic polyharmonic wave equation 
\begin{equation}\label{eq:model}
(-\Delta)^nu-k^{2}u+\rho u=0\quad{\rm in}~{\mathbb R}^d, 
\end{equation}
where $d=2$ or $3$, $n\geq 2$ is an integer, $k>0$ is the wavenumber, and the potential $\rho$ is assumed to be a complex-valued generalized microlocally isotropic Gaussian (GMIG) random field. The specific requirements for this assumption are detailed in subsection \ref{sec:crf}. The total wave field $u$ in \eqref{eq:model} comprises the superposition of the incident wave field $u^{\rm i}$ and the scattered wave field $u^{\rm s}$. The incident wave field $u^{\rm i}$ is assumed to be generated by a plane wave
\begin{equation}\label{uinc}
u^{\rm i}(x,\theta,\kappa)=e^{{\rm i}\kappa x\cdot \theta}, 
\end{equation}
where $\theta\in {\mathbb S}^{d-1}:=\{x\in{\mathbb R}^d: |x|=1\}$ is the incident direction and $\kappa=k^{\frac{1}{n}}>0$ denotes the modified wavenumber. It can be verified that the incident wave field satisfies $(-\Delta)^n u^{\rm i}-\kappa^{2n}u^{\rm i}=0$ in ${\mathbb R}^d$. 

To ensure the well-posedness of the scattering problem in the whole space $\mathbb R^d$, the scattered wave $u^{\rm s}$ is required to satisfy the radiation condition 
\begin{equation}\label{eq:radia}
\lim_{r\to\infty}\int_{\partial B_r}\left|\partial_{\nu}u^{\rm s}(x,\theta,\kappa)-{\rm i}\kappa u^{\rm s}(x,\theta,\kappa)\right|^2d\gamma(x)=0,
\end{equation}
where $B_r$ denotes the ball centered at the origin with radius $r$, and $\nu$ denotes the unit normal vector of the sphere $\partial B_r$. It is worth mentioning that only a single radiation condition on $u^{\rm s}$, as given in \eqref{eq:radia}, is required,  instead of $n$ conditions on $(-\Delta)^ju^{\rm s}$ with $j=0,1,\cdots,n-1$ for the $2n$th order partial differential equation \eqref{eq:model}. This requirement aligns with the radiation condition for the biharmonic wave equation in the case of $n=2$, as investigated in \cite{BH20}. Further details about the radiation condition are provided in subsection \ref{sec:radia}. The radiation condition \eqref{eq:radia} guarantees that the scattered wave $u^{\rm s}$ exhibits the asymptotic behavior 
\begin{equation*}\label{a3}
u^{\rm s}(x,\theta,\kappa)=\frac{e^{{\rm i}\kappa |x|}}{|x|^{\frac{d-1}{2}}}\left[u^{\infty}(\hat{x},\theta,\kappa)+O(|x|^{-1})\right],\quad |x|\to\infty,
\end{equation*}
where $u^{\infty}$ is the far-field pattern of the scattered field $u^{\rm s}$, and $\hat{x}:=x/|x|\in\mathbb S^{d-1}$  is referred to as the observation direction of the far-field pattern.

Based on the polyharmonic wave equation \eqref{eq:model} and the radiation condition \eqref{eq:radia}, there are two types of scattering problems to be studied. The direct scattering problem involves investigating the existence, uniqueness, and regularity of the solution $u$ in an appropriate sense, given the random potential $\rho$ and the incident wave field $u^{\rm i}$. In contrast, the inverse scattering problem aims to determine certain statistics of the unknown random potential $\rho$ from external measurements of the scattered field $u^{\rm s}$.

The inverse random potential scattering problems for second order wave equations were studied in \cite{LPS08, CHL19, LLW22a, LLM21, LLW23} under the assumption that the potential is a real-valued GMIG random field. More precisely, in the two-dimensional case, it was shown that the microlocal strength of the covariance operator of the random potential can be uniquely determined using a single realization of the near-field data associated with point sources. This uniqueness was demonstrated for the Schr\"{o}dinger equation in \cite{LPS08} and for the elastic wave equation in \cite{LLW22a}. In the three-dimensional case, the strength of the covariance operator of the random potential was shown to be uniquely recovered by a single realization of the far-field pattern associated with plane incident waves. Studies of this uniqueness were conducted for the Schr\"{o}dinger equation in \cite{CHL19, LLM21} and for the elastic wave equation in \cite{LLW23}. However, there is a lack of results concerning the inverse random potential problem based the far-field pattern generated by plane waves in two dimensions or on the near-field data associated with point sources in three dimensions. The primary challenges are twofold: (i) in the two-dimensional case, deriving an explicit form of the far-field pattern for the higher order terms in the Born series is difficult due to the complex series representation of the Hankel function, and (ii) in the three-dimensional case, the decay rate of the fundamental solution is insufficient to guarantee the convergence of the Born series. For fourth order wave equations, such as the biharmonic wave equation, the fundamental solution exhibits a higher decay rate compared to second order wave equations, including acoustic and elastic wave equations. This property facilitates a unified treatment of inverse random potential problems in both two and three dimensions using near-field data associated with point sources, provided certain regularity assumptions on the random potential are satisfied \cite{LW24}. For general higher order wave equations driven by complex-valued random noises, no results on the inverse random potential problem currently exist, to the best of our knowledge. The challenges lie in the complex structure of the random potential and the fundamental solution, as well as the absence of a unique continuation principle.

This work addresses the inverse scattering problem for the stochastic polyharmonic wave equation, with the aim of determining appropriate statistics of the random potential $\rho$ under a relaxed regularity assumption. It contains three main contributions. First, the random potential is extended to a complex-valued GMIG random field of order $(-m_1,-m_2)$ for the first time in inverse random potential problems. Its distribution is determined not only by its mean value and covariance operator of order $-m_1$, but also by its relation operator of order $-m_2$. Consequently, microlocal strengths of both the covariance and relation operators need to be reconstructed simultaneously. Second, the well-posedness of the direct scattering problem is established for sufficiently large wavenumbers in the distributional sense under a relaxed condition $m=\min\{m_1,m_2\}\in(d-2n+1,d]$ on the regularity of the random potential. This condition accommodates the white noise, i.e., $m=0$, for the case $n=2$ and $d=2$ or the case $n\ge3$. Third, it establishes the uniqueness of determining the strengths for covariance and relation operators of the random potential in the almost surely sense within a unified framework for both the two- and three-dimensional cases, based on a countable set of measurements. Specifically, to uniquely reconstruct the strengths, a single realization of the far-field patterns of the scattered field $u^{\rm s}$ generated by plane waves observed at a sequence of points with an accumulation point is required.

We introduce some general notations used in this paper. The notation $a\lesssim b$ means $a\leq Cb$ for some constant $C>0$, which may change from line to line in the proofs. Define $a\vee b:=\max\{a,b\}$ and $a\wedge b:=\min\{a,b\}$. The notation `$\mathbb P\text{-}a.s.$' represents that equations hold in the almost surely sense, i.e., with probability one. The notation  $\widehat{\phi}$ stands for the Fourier transform of a function $\phi$ defined by $\widehat{\phi}(\xi):=\int_{{\mathbb R}^d}\phi(x)e^{-{\rm i}x\cdot\xi}dx$.

The structure of the paper is as follows. Section \ref{sec:pre} introduces preliminaries on real- and complex-valued GMIG random fields. Additionally, it discusses the radiation condition of the polyharmonic wave equation. Section \ref{sec:direct} examines the well-posedness of the direct scattering problem \eqref{eq:model}--\eqref{eq:radia}. Section \ref{sec:inverse} focuses on the inverse scattering problem, which is to determine the strengths of both the covariance and relation operators for the random potential. The paper ends with final remarks in section \ref{sec:con}. 

\section{Preliminaries}\label{sec:pre}

In this section, we briefly introduce real- and complex-valued GMIG random fields. Furthermore, we justify that the radiation condition \eqref{eq:radia} is sufficient for the scattering problem of the polyharmonic wave equation \eqref{eq:model}.

\subsection{Real-valued GMIG random fields}

Let $\mathcal{D}({\mathbb R}^d)$ represent the space of test functions, defined as $C_0^{\infty}({\mathbb R}^d)$ with a convex topology, $\mathcal{D}'({\mathbb R}^d)$ denote the dual space of $\mathcal{D}({\mathbb R}^d)$, and $\langle \cdot,\cdot\rangle$ denote the dual product between $\mathcal{D}({\mathbb R}^d)$ and $\mathcal{D}'({\mathbb R}^d)$.

Let $(\Omega,\mathcal{F},{\mathbb P})$ be a complete probability space. A distribution $\rho$ is a real-valued generalized Gaussian random field if $\rho(\omega)\in\mathcal{D}'({\mathbb R}^d)$ for each $\omega\in\Omega$ and the mapping 
$
\omega\mapsto\langle \rho(\omega), \psi\rangle\in\mathbb R
$
defines a real-valued Gaussian random variable for any $\psi\in\mathcal{D}({\mathbb R}^d)$. 

The covariance operator $\mathcal C_{\rho}: \mathcal{D}({\mathbb R}^d)\to\mathcal{D}'({\mathbb R}^d)$ of the generalized Gaussian random field $\rho$ is defined by
\begin{equation*}
\langle \mathcal C_{\rho}\varphi,\psi\rangle:={\mathbb E}\left[\left(\langle \rho,\varphi\rangle-{\mathbb E}\langle \rho,\varphi\rangle\right)\left(\langle \rho,\psi\rangle-{\mathbb E}\langle \rho,\psi\rangle\right)\right]\quad\forall\,\varphi,\psi\in \mathcal{D}({\mathbb R}^d).
\end{equation*}
By the Schwartz kernel theorem, there exists a unique kernel $K_{\rho}^c\in\mathcal D'(\mathbb R^d\times\mathbb R^d)$ such that
$\langle \mathcal C_{\rho}\varphi,\psi\rangle=\langle K^c_{\rho}, \psi\otimes\varphi\rangle$ for any $\psi,\varphi\in {\mathcal D}({\mathbb R}^d).$ 
Thus, $K_\rho^c$ and $\mathcal C_\rho$ can be expressed informally as
\begin{equation*}
K^c_{\rho}(x,y)={\mathbb E}\left[(\rho(x)-\mathbb E[\rho(x)])(\rho(y)-\mathbb E[\rho(y)])\right],\quad (\mathcal C_{\rho}\varphi)(x)=\int_{{\mathbb R}^d}K^c_{\rho}(x,y)\varphi(y)dy
\end{equation*}
such that
\[
\langle\mathcal C_\rho\varphi,\psi\rangle=\int_{\mathbb R^d}\int_{\mathbb R^d}K_{\rho}^c(x,y)\varphi(y)\psi(x)dydx.
\]

A real-valued generalized Gaussian random field $\rho$ is called a generalized microlocally isotropic Gaussian (GMIG) of order $-m$ in $D$ if, in addition, $\mathcal C_{\rho}$ is a classical pseudo-differential operator whose symbol $\sigma_{\rho}^c\in {\mathcal S}^{-m}$ has the form
\begin{equation*}
\sigma_{\rho}^c(x,\xi)=a_{\rho}^c(x)|\xi|^{-m}+b_{\rho}^c(x,\xi),
\end{equation*}
where $a_{\rho}^c\in C_0^{\infty}(D)$ is referred to as the microlocal strength of $\rho$ satisfying $a_{\rho}^c\geq 0$, and $b_{\rho}^c\in\mathcal S^{-m-1}$. Here, ${\mathcal S}^{-m}$ denotes the space of symbols of order $-m$ on ${\mathbb R}^d\times {\mathbb R}^d$, defined as 
 \begin{equation*}\label{b1}
{\mathcal S}^{-m}={\mathcal S}^{-m}({\mathbb R}^d\times{\mathbb R}^d):=\left\{\sigma\in C^{\infty}({\mathbb R}^d\times{\mathbb R}^d): \big|\partial_{\xi}^{\alpha}\partial_x^{\beta}\sigma(x,\xi)\big|\leq C_{\alpha,\beta}(1+|\xi|)^{-m-|\alpha|}\right\},
\end{equation*}
where $C_{\alpha, \beta}>0$ is a constant, $\alpha$ and $\beta$ are multi-indices with $|\alpha|:=\sum\limits_{j=1}^d\alpha_j$ for $\alpha=(\alpha_1,...,\alpha_d)$. It can be verified that the relationship between the covariance kernel $K_\rho^c$ and the symbol $\sigma_\rho^c$ of the GMIG random field $\rho$ is given by 
\begin{equation*}
K^c_{\rho}(x,y)=\frac{1}{(2\pi)^d}\int_{{\mathbb R}^d}e^{{\rm i}(x-y)\cdot \xi}\sigma_{\rho}^c(x,\xi)d\xi,
\end{equation*}
or equivalently, 
\begin{equation*}
\sigma_{\rho}^c(x,\xi)=\int_{{\mathbb R}^d}K^c_{\rho}(x,y)e^{-{\rm i}(x-y)\cdot\xi}dy.
\end{equation*}
We refer to \cite{LPS08, CHL19} for more details on real-valued GMIG random fields.

\subsection{Complex-valued GMIG random fields}\label{sec:crf}

Next, we consider the complex-valued random field $\rho=\rho_1+{\rm i}\rho_2$, where $\rho_j$ for $j=1,2$ are independent real-valued GMIG random fields of orders $-m_j$ in a bounded domain $D\subset {\mathbb R}^d$. This random field $\rho$ is referred to as a complex-valued GMIG random field of order $(-m_1,-m_2)$ in $D$.

For simplicity, we assume that $\rho_j$, $j=1,2$, are centered random fields, satisfying $\mathbb E\langle\rho_j,\varphi\rangle=0$ for any $\varphi\in\mathcal D(\mathbb R^d)$. Therefore, $\rho$ is also a centered Gaussian random field, determined not only by its covariance operator $\mathcal C_{\rho}$ but also by its relation operator $\mathcal R_{\rho}$, defined as follows:
\begin{equation*}
\langle \mathcal C_{\rho}\varphi, \psi \rangle:={\mathbb E}\left[\langle \overline{\rho}, \varphi\rangle \langle \rho, \psi \rangle \right],\quad
\langle \mathcal R_{\rho}\varphi, \psi \rangle:={\mathbb E}\left[ \langle \rho, \varphi \rangle \langle \rho, \psi \rangle \right]\quad\forall\,\varphi,\psi\in{\mathcal D}({\mathbb R}^d).
\end{equation*}

Similar to the case of real-valued GMIG random fields, there exist unique kernels $K_{\rho}^c$ and $K_{\rho}^r$ in ${\mathcal D}'({\mathbb R}^d\times{\mathbb R}^d)$ such that 
$
\langle\mathcal C_{\rho}\varphi, \psi \rangle=\langle K_{\rho}^c, \psi\otimes\varphi \rangle,
$ 
$
\langle\mathcal R_{\rho}\varphi,\psi \rangle=\langle K_{\rho}^r, \psi\otimes\varphi \rangle,
$
and they can be expressed as $K_{\rho}^c(x,y)={\mathbb E}\big[\rho(x)\overline{\rho(y)}\big]$ and $K_{\rho}^r(x,y)={\mathbb E}\left[\rho(x)\rho(y)\right]$. 

Since $\rho_1$ and $\rho_2$ are independent, the covariance and relation operators of $\rho$ are still pseudo-differential operators satisfying
$\mathcal C_{\rho}=\mathcal C_{\rho_1}+\mathcal C_{\rho_2}$ and $\mathcal R_{\rho}=\mathcal C_{\rho_1}-\mathcal C_{\rho_2}$, where their symbols and kernels are given by 
\begin{eqnarray}\label{eq:sigma}
\sigma_{\rho}^c(x,\xi)=\sigma_{\rho_1}^c(x,\xi)+\sigma_{\rho_2}^c(x,\xi),\quad \sigma_{\rho}^r(x,\xi)=\sigma_{\rho_1}^c(x,\xi)-\sigma_{\rho_2}^c(x,\xi),
\end{eqnarray}
and 
\begin{eqnarray*}
K_{\rho}^c(x,y)=K^c_{\rho_1}(x,y)+K^c_{\rho_2}(x,y),\quad K_{\rho}^r(x,y)=K^c_{\rho_1}(x,y)-K^c_{\rho_2}(x,y),
\end{eqnarray*}
respectively. As in subsection 2.1, the symbols $\sigma_{\rho}^\eta$ and kernels $K^\eta_{\rho}$, where $\eta\in\{c,r\}$, satisfy the following relation:
\begin{equation}\label{eq:sigma_K}
\sigma_{\rho}^\eta(x,\xi)=\int_{{\mathbb R}^d}K^\eta_{\rho}(x,y)e^{-{\rm i}(x-y)\cdot\xi}dy.
\end{equation} 

The regularity of $\rho$ depends on that of $\rho_j$, $j=1,2$, as specified in the subsequent lemma (cf. \cite[Lemma 2.1]{LLW23}).

\begin{lemma}\label{lm:rho}
Let $\rho$ be a centered complex-valued GMIG random field of order $(-m_1,-m_2)$ in a bounded domain $D\subset {\mathbb R}^d$, where $m:=\min\{m_1,m_2\}\le d$. Then $\rho\in W^{\frac{m-d}{2}-\epsilon, p}(D)$ for any $\epsilon>0$ and $p>1$.
\end{lemma}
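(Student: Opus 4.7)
The approach is to reduce to the real-valued case and then combine symbol calculus for the covariance operator with the moment equivalence for Gaussian random variables. Writing $\rho=\rho_1+{\rm i}\rho_2$ with independent centered real-valued GMIG fields $\rho_j$ of orders $-m_j$, and using the linearity of the Sobolev norm, it suffices to prove that each $\rho_j\in W^{(m_j-d)/2-\epsilon,p}(D)$ almost surely; the continuous inclusion $W^{(m_j-d)/2-\epsilon,p}(D)\subset W^{(m-d)/2-\epsilon,p}(D)$, valid since $m\le m_j$, then gives the claim for $\rho$. We may assume $m_j\le d$, since the case $m_j>d$ only yields stronger regularity.

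Fix $j$ and pick a cutoff $\chi\in C_0^\infty(\mathbb R^d)$ with $\chi\equiv 1$ on $\overline D$, so that the localized field $\chi\rho_j$ is a compactly supported Gaussian distribution whose covariance operator is $\chi\mathcal C_{\rho_j}\chi$, a classical pseudo-differential operator of order $-m_j$. Set $s:=(m_j-d)/2-\epsilon<0$ and introduce the smoothed field
\[
\tilde\rho_j:=(I-\Delta)^{s/2}(\chi\rho_j).
\]
By the composition rule for pseudo-differential operators, the covariance of $\tilde\rho_j$ equals $(I-\Delta)^{s/2}\chi\mathcal C_{\rho_j}\chi(I-\Delta)^{s/2}$, which is pseudo-differential of order $2s-m_j=-d-2\epsilon<-d$. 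Its kernel is therefore continuous on $\mathbb R^d\times\mathbb R^d$, and in particular $x\mapsto\mathbb E[|\tilde\rho_j(x)|^2]$ is uniformly bounded on $\overline D$.

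Since $\tilde\rho_j(x)$ is a centered Gaussian random variable with uniformly bounded variance, the standard Gaussian moment equivalence $\mathbb E[|\tilde\rho_j(x)|^p]\lesssim(\mathbb E[|\tilde\rho_j(x)|^2])^{p/2}$ yields
\[
\mathbb E\bigl[\|\tilde\rho_j\|_{L^p(D)}^p\bigr]=\int_D\mathbb E[|\tilde\rho_j(x)|^p]\,dx\lesssim|D|,
\]
so $\tilde\rho_j\in L^p(D)$ almost surely. This is precisely the Bessel-potential characterization of $\chi\rho_j\in W^{s,p}(D)$, and hence $\rho_j\in W^{(m_j-d)/2-\epsilon,p}(D)$ almost surely, completing the reduction.

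The main technical obstacle is the bookkeeping of orders in the pseudo-differential calculus when composing the smooth cutoff $\chi$ with the Bessel-potential operator $(I-\Delta)^{s/2}$ for negative $s$, and verifying that the resulting covariance is genuinely a pseudo-differential operator of order $-d-2\epsilon$ with continuous kernel. Once this structural point is settled, the Gaussian moment equivalence upgrades the pointwise $L^2$ bound to an $L^p$ bound for every $p>1$ with no further loss in the Sobolev index; this explains why the same $-\epsilon$ loss is enough for all $p$.
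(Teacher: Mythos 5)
Your proof is correct and follows essentially the same route as the argument the paper relies on: the paper does not prove this lemma itself but cites \cite[Lemma 2.1]{LLW23}, whose proof (building on \cite{LPS08,CHL19}) proceeds exactly as you do—split $\rho$ into its independent real and imaginary GMIG parts, localize, apply a Bessel potential so that the covariance becomes a pseudo-differential operator of order $-d-2\epsilon<-d$ with bounded continuous kernel, and conclude via Gaussian moment equivalence and Fubini. The only points to tidy are routine: to get $\chi\rho_j\in W^{s,p}(D)$ by restriction you should bound $\mathbb E\|\tilde\rho_j\|_{L^p(\mathbb R^d)}^p$ rather than only over $D$ (this follows from the rapid off-diagonal decay of the Bessel kernel, since $\chi\rho_j$ has compact support), justify a measurable modification of $\tilde\rho_j$ from the continuity of its covariance kernel before integrating pointwise moments, and for $m_j>d$ simply run the same argument with $s=\frac{m-d}{2}-\epsilon$ (so $2s-m_j<-d$ still holds) instead of assuming $m_j\le d$.
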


In this work, the random potential $\rho$ satisfies the following assumption.

\begin{assumption}\label{as:rho}
We assume that the potential $\rho=\rho_1+{\rm i}\rho_2$ is a centered complex-valued GMIG random field of order $(-m_1,-m_2)$ within a bounded domain $D\subset\mathbb R^d$, where $m_1,m_2\in(d-2n+1,d]$, and $\rho_1, \rho_2$ denote the independent real and imaginary parts of $\rho$, respectively. 
\end{assumption}

As a result, the covariance operator $\mathcal C_{\rho}$ and the relation operator $\mathcal R_{\rho}$ are classical pseudo-differential operators of order $-m$, where $m=\min\{m_1,m_2\}\in(d-2n+1,d]$, with symbols $\sigma_{\rho}^{c}$ and $\sigma_{\rho}^{r}\in {\mathcal S}^{-m}(\mathbb R^d\times\mathbb R^d)$ satisfying
\begin{equation*}
\sigma_{\rho}^{\eta}(x,\xi)=a_{\rho}^{\eta}(x)|\xi|^{-m}+b_{\rho}^{\eta}(x,\xi),
\end{equation*}
where $b_{\rho}^{\eta}\in {\mathcal S}^{-m-1}(\mathbb R^d\times\mathbb R^d)$ and $a_{\rho}^\eta, b_{\rho}^\eta(\cdot,\xi)\in C_0^\infty(D)$ for $\eta\in\{c,r\}$. Here $a_\rho^c$ and $a_\rho^r$ are called the strengths of the covariance operator $\mathcal C_\rho$ and the relation operator $\mathcal R_\rho$ associated with the random potential $\rho$, respectively. 

\begin{remark}\label{rk:example}
An example of a real-valued GMIG is given by $\sqrt{a_{\rho}^c}(-\Delta)^{-\frac{m}{4}}\dot{W}$, as discussed in \cite[Section 2.2]{LW3}.  Building on this, a complex-valued GMIG of order $(-m_1, -m_2)$ can be constructed by defining two independent random fields:
\[
\rho_1 = \sqrt{a_{\rho_1}^c}(-\Delta)^{-\frac{m_1}{4}}\dot{W}_1, \quad  \rho_2 = \sqrt{a_{\rho_2}^c}(-\Delta)^{-\frac{m_2}{4}}\dot{W}_2,
\]
where \(W_1\) and \(W_2\) are independent real-valued Wiener processes. Setting \(\rho = \rho_1 + \mathrm{i}\rho_2\), we obtain a complex-valued GMIG of the desired order.  The characterization of this GMIG is determined not only by its covariance operator but also by its relation operator.
\end{remark}

\begin{remark}\label{rk:strength}
According to \eqref{eq:sigma},  we can observe that

\begin{itemize}

\item[(i)] if $m_1<m_2$ such that $m=m_1$, then $a_{\rho}^\eta=a_{\rho_1}^c$ for $\eta\in\{c,r\}$, implying that the real part $\rho_1$ of the random potential $\rho$ has a stronger effect compared with the imaginary part $\rho_2$.

\item[(ii)] if $m_1>m_2$ such that $m=m_2$, then $a_{\rho}^c=a_{\rho_2}^c$ and $a_{\rho}^r=-a_{\rho_2}^c$, indicating that the imaginary part $\rho_2$ of the random potential $\rho$ has a stronger effect than the real part $\rho_1$. 

\item[(iii)] if $m_1=m_2=m$, then $a_{\rho}^c=a_{\rho_1}^c+a_{\rho_2}^c$ and $a_{\rho}^r=a_{\rho_1}^c-a_{\rho_2}^c$.
\end{itemize}
\end{remark}

For the inverse scattering problem, our goal is to determine the strengths $a_{\rho}^c$ and $a_{\rho}^r$ of the covariance and relation operators for the random potential $\rho$.

\subsection{The radiation condition}\label{sec:radia}

For the $n$th order polyharmonic wave equation, it seems to be necessary to impose a total of $n$ radiation conditions on the scattered field and its derivatives, as stated below:
\begin{equation}\label{eq:radia2}
\lim_{r\to\infty}\int_{\partial B_r}\left|\partial_{\nu}(-\Delta)^ju^{\rm s}(x,\theta,\kappa)-{\rm i}\kappa(-\Delta)^ju^{\rm s}(x,\theta,\kappa)\right|^2d\gamma(x)=0,
\end{equation}
for $j=0, 1, \cdots, n-1$. Below, we deduce that the radiation conditions given in \eqref{eq:radia2} are equivalent to the radiation condition \eqref{eq:radia}, which is imposed solely on the scattered field $u^{\rm s}$.

Since the potential $\rho$ is supported in $D$, it follows from \eqref{eq:model} that the scattered field $u^{\rm s}$ satisfies
$
(-\Delta)^nu^{\rm s}-\kappa^{2n}u^{\rm s}=0
$ 
in $\mathbb R^d\backslash\overline{B_R}$, 
where $B_R\subset\mathbb R^d$ is the ball centered at the origin with a sufficiently large radius $R$ such that $D\subset B_R$. It can be verified that the following operator splittings hold:
\[
(-\Delta)^n-\kappa^{2n}=\prod_{j=0}^{n-1}(-\Delta-\kappa_j^2),
\]
where 
\begin{equation}\label{eq:kj}
\kappa_j:=\kappa e^{{\rm i}\frac{j\pi}n}
\end{equation}
for $j=0,1,\cdots,n-1$ with the imaginary parts satisfying
\begin{align} \label{eq:Im_kappa}
\Im[\kappa_0]=0,\quad\Im[\kappa_j]=\kappa\sin\left(\frac{j\pi}n\right)>0,\quad j=1,\cdots,n-1.
\end{align}
Define the functions
\[
v_j:=\frac{\kappa_j^2}{n\kappa^{2n}}\Bigg(\prod_{\substack{0\le l\le n-1\\l\neq j}}\left(-\Delta-\kappa_l^2\right)\Bigg)u^{\rm s},\quad j=0,1,\cdots,n-1,
\]
which satisfy 
$
(\Delta+\kappa_j^2)v_j=0
$ in $\mathbb R^d\backslash\overline{B_R}$ for $j=0,1,\cdots,n-1$, and 
$
\sum_{j=0}^{n-1}v_j=u^{\rm s}. 
$

For $j=0,1,\cdots,n-1$, a straightforward calculation yields 
\begin{align}\label{eq:Vand}
n\kappa^{2n}v_j&=\kappa_j^2\prod_{\substack{0\le l\le n-1\\l\neq j}}\left(-\Delta-\kappa_l^2\right)u^{\rm s}\notag\\
&=\kappa_j^2\Bigg[(-\Delta)^{n-1}u^{\rm s}+\bigg(\sum_{\substack{0\le l\le n-1\\l\neq j}}(-\kappa_l^2)\bigg)(-\Delta)^{n-2}u^{\rm s}\notag\\
&\quad+\bigg(\sum_{\substack{0\le l_1, l_2\le n-1\\l_1, l_2\neq j, l_1\neq l_2}}\left(-\kappa_{l_1}^2\right)\left(-\kappa_{l_2}^2\right)\bigg)(-\Delta)^{n-3}u^{\rm s}+\cdots+\prod_{\substack{0\le l\le n-1\\l\neq j}}\left(-\kappa_l^2\right)u^{\rm s}\Bigg]\notag\\
&=\kappa_j^2\Bigg[(-\Delta)^{n-1}u^{\rm s}+\kappa_j^2(-\Delta)^{n-2}u^{\rm s}+\kappa_j^4\sum_{\substack{0\le l\le n-1\\l\neq j}}(-\Delta)^{n-3}u^{\rm s}+\cdots+\frac{\kappa^{2n}}{\kappa_j^2}u^{\rm s}\Bigg]\notag\\
&=\kappa_j^2(-\Delta)^{n-1}u^{\rm s}+\kappa_j^4(-\Delta)^{n-2}u^{\rm s}+\kappa_j^6(-\Delta)^{n-3}u^{\rm s}+\cdots+\kappa^{2n}u^{\rm s},
\end{align}
where we used the identities
\[
\sum_{l=0}^{n-1}(-\kappa_l^2)=0,~\sum_{\substack{0\leq l_1, l_2\leq n-1\\l_1\neq l_2}}\left(-\kappa_{l_1}^2\right)\left(-\kappa_{l_2}^2\right)=0,~\cdots,~\prod_{l=0}^{n-1}(-\kappa_l^2)=-\kappa^{2n}=-\kappa_j^{2n},
\]
according to the expression of the $n$th order polynomial
$
z^n-\kappa^{2n}=\prod_{l=0}^{n-1}(z-\kappa_l^2).
$

Combining \eqref{eq:Vand} for $j=0,1,\cdots, n-1$, we establish the equivalence between the sets $\{v_j\}_{j=0,\cdots,n-1}$ and $\{(-\Delta)^ju^{\rm s}\}_{j=0,\cdots,n-1}$. This equivalence arises because the distinct coefficients  $\{\kappa_j^2,\kappa_j^4,\cdots,\kappa_j^{2n}\}$ form a Vandermonde matrix, which is invertible. Hence, the radiation conditions \eqref{eq:radia2} are equivalent to the following conditions:
\[
\lim_{r\to\infty}\int_{\partial B_r}\left|\partial_{\nu}v_j(x)-{\rm i}\kappa v_j(x)\right|^2d\gamma(x)=0,\quad j=0,1,\cdots,n-1,
\]
which are automatically satisfied for $j\ge1$ due to the exponential decay of $v_j$ since $\Im[\kappa_j]>0$ for $j\ge1$, as specified in \eqref{eq:Im_kappa}. We then conclude that the condition \eqref{eq:radia2} is equivalent to the radiation condition on $v_0$, and consequently, to the radiation condition on $u^{\rm s}=\sum_{j=0}^{n-1}v_j$ as imposed in \eqref{eq:radia}.

\section{The direct scattering problem}
\label{sec:direct}

This section examines the well-posedness of the direct scattering problem \eqref{eq:model}--\eqref{eq:radia}, which is shown to have a unique solution in the distributional sense for sufficiently large wavenumbers.

\subsection{The integral operators}

First, we introduce the Green function for the two- and three-dimensional polyharmonic wave equation, upon which two integral operators are defined.

The Green function $G$ to the polyharmonic wave operator $(-\Delta)^n-\kappa^{2n}$, defined as the fundamental solution of 
$
((-\Delta)^n-\kappa^{2n})G(x,y,\kappa)=-\delta(x-y),
$
takes the form
\begin{equation}\label{eq:Green1}
G(x,y,\kappa)=-\frac{1}{n\kappa^{2n}}\sum_{j=0}^{n-1}\kappa_j^2\Phi(x,y,\kappa_j),
\end{equation}
where
\begin{equation}\label{eq:Phi}
\Phi(x,y,\kappa_j):=\left\{\begin{aligned}
&\frac{\rm i}{4}H_0^{(1)}(\kappa_j |x-y|),\quad&d=2,\\
&\frac{1}{4\pi}\frac{e^{{\rm i}\kappa_j |x-y|}}{|x-y|},\quad&d=3,
\end{aligned}\right.
\end{equation}
is the fundamental solution of the Helmholtz equation, and $\kappa_j$ is defined in \eqref{eq:kj}. Here, $H_0^{(1)}$ is the Hankel function of the first kind with order zero. 

Define two integral operators 
\begin{align}
(\mathcal H_\kappa\phi)(x):&=\int_{\mathbb R^d}G(x,y,\kappa)\phi(y)dy,\notag\\\label{eq:Koperator}
(\mathcal K_{\kappa}\phi)(x):&=\int_{{\mathbb R}^d}G(x,y,\kappa)\rho(y)\phi(y)dy,
\end{align}
where $\rho$ is the random potential. The compactness and decay property with respect to the wavenumber $\kappa$ of the operators $\mathcal H_\kappa$ and $\mathcal K_\kappa$ are given below,  and these properties are utilized in subsequent analysis.

\begin{lemma}\label{lm:H}
Let $D$ and $B$ be two bounded domains in $\mathbb{R}^d$, with $B$ having a locally Lipschitz boundary.
\begin{itemize}
\item[(i)] For $s:=s_1+s_2\in(0,2n-1)$, where $s_1,s_2\ge0$, the operator $\mathcal H_\kappa$ is bounded from $H^{-s_1}(D)$ to $H^{s_2}(B)$ and satisfies 
\begin{align*}
\|\mathcal H_\kappa\|_{\mathcal L(H^{-s_1}(D),H^{s_2}(B))}\lesssim \kappa^{s-2n+1}. 
\end{align*}
Furthermore, for any $s\in(0,2n-1)$ and $\epsilon>0$, the operator $\mathcal H_\kappa$ is bounded from $H^{-s}(D)$ to $L^\infty(B)$, satisfying 
\begin{align*}
\|\mathcal H_\kappa\|_{\mathcal L(H^{-s}(D),L^\infty(B))}\lesssim \kappa^{s-2n+1+\frac{d+\epsilon}2}.
\end{align*}
\item[(ii)] For any $p,q>1$, where $\frac1p+\frac1q=1$, and $0<\gamma<\min\{\frac{2n-1}2,\frac{2n-1}2+(\frac1q-\frac12)d\}$, the operator $\mathcal H_\kappa$ is compact from $W^{-\gamma,p}(D)$ to $W^{\gamma,q}(B)$. 
\end{itemize}
\end{lemma}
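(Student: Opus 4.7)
The strategy is built on the partial fraction decomposition of the Green function in~\eqref{eq:Green1}, which expresses $\mathcal H_\kappa$ as a sum of classical Helmholtz resolvents and isolates the single singular factor (the one with real wavenumber $\kappa_0=\kappa$). Writing
\[
\mathcal H_\kappa=-\frac{1}{n\kappa^{2n}}\sum_{j=0}^{n-1}\kappa_j^2\,R_j,\qquad R_j\phi(x):=\int_{\mathbb R^d}\Phi(x,y,\kappa_j)\phi(y)\,dy,
\]
with $R_0$ the outgoing Helmholtz resolvent and $R_j=(-\Delta-\kappa_j^2)^{-1}$ for $j\ge 1$, I would treat each piece separately. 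The choice \eqref{eq:kj} forces $||\xi|^2-\kappa_j^2|\gtrsim\kappa^2$ uniformly in $\xi$ when $j\ge 1$, so the Fourier symbol of $R_j$ is bounded by $c\kappa^{-2}$ and its kernel decays exponentially in $|x-y|$ thanks to \eqref{eq:Im_kappa}; a direct Plancherel estimate then gives $\|R_j\|_{H^{-s_1}\to H^{s_2}}\lesssim\kappa^{-2+s}$ on $s\in[0,2]$, which after multiplication by $\kappa_j^2/(n\kappa^{2n})\sim\kappa^{2-2n}$ is only a lower-order perturbation. For the singular $j=0$ factor, the classical Agmon--H\"ormander (limiting absorption) estimate provides $\|R_0\|_{L^2(D)\to L^2(B)}\lesssim\kappa^{-1}$, and summation yields the base inequality $\|\mathcal H_\kappa\|_{L^2(D)\to L^2(B)}\lesssim\kappa^{-(2n-1)}$.

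To extend the base bound to the full Sobolev statement in~(i), I would invoke interior elliptic regularity for the polyharmonic operator. On nested compact sets $B\Subset B'\Subset D'$, the equation $((-\Delta)^n-\kappa^{2n})u=\phi$ with $u=\mathcal H_\kappa\phi$ gives
\[
\|u\|_{H^{s_2}(B)}\lesssim\|\phi\|_{H^{s_2-2n}(D')}+\kappa^{2n}\|u\|_{H^{s_2-2n}(B')}+\|u\|_{L^2(B')},
\]
and iterating against the $L^2$ base estimate, while exploiting the key balance $\kappa^{2n}\cdot\kappa^{-(2n-1)}=\kappa$, produces $\|\mathcal H_\kappa\|_{L^2(D)\to H^{s_2}(B)}\lesssim\kappa^{s_2-(2n-1)}$ for $s_2<2n$. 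A dual argument -- the adjoint of $\mathcal H_\kappa$ is the analogous operator with incoming radiation, to which the same bounds apply -- yields the symmetric estimate $\|\mathcal H_\kappa\|_{H^{-s_1}(D)\to L^2(B)}\lesssim\kappa^{s_1-(2n-1)}$, and complex interpolation between these two endpoints covers the full open range $s=s_1+s_2\in(0,2n-1)$. The $L^\infty$ assertion follows by applying this Sobolev bound with $s_2=(d+\epsilon)/2$ and using $H^{(d+\epsilon)/2}(B)\hookrightarrow L^\infty(B)$.

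For part~(ii) I would factor $\mathcal H_\kappa$ through the Hilbert scale:
\[
W^{-\gamma,p}(D)\hookrightarrow H^{-s_1}(D)\xrightarrow{\;\mathcal H_\kappa\;}H^{s_2}(B)\hookrightarrow W^{\gamma,q}(B),
\]
where the first and last inclusions are standard Sobolev embeddings on the bounded domains, requiring the loss parameters $s_1-\gamma$ and $s_2-\gamma$ to be at least $d\max\{0,\tfrac1p-\tfrac12\}=d\max\{0,\tfrac12-\tfrac1q\}$. The hypothesis $\gamma<\min\{(2n-1)/2,\,(2n-1)/2+d(\tfrac1q-\tfrac12)\}$ is exactly the condition under which $s_1,s_2$ can be chosen to satisfy $s_1+s_2<2n-1$ (making part~(i) applicable) while preserving a \emph{strict} inequality in at least one of the Sobolev embeddings; Rellich--Kondrachov then upgrades that embedding to a compact inclusion, and composing with the bounded map from~(i) delivers the required compactness.

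The main obstacle is the Sobolev-scale estimate in~(i): the Helmholtz limiting absorption principle alone only covers the short interval $s<1$, so the remaining $2(n-1)$ derivatives of regularity must be harvested from the polyharmonic ellipticity -- equivalently, from the $n-1$ nonresonant factors $(-\Delta-\kappa_j^2)^{-1}$ -- in such a way that the amplification $\kappa^{2n}$ appearing in the elliptic estimate never swallows the gain $\kappa^{-(2n-1)}$ coming from limiting absorption. It is precisely this balance that confines the range of validity to the open interval $s\in(0,2n-1)$.
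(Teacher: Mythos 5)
The paper itself offers no written proof of this lemma: part (i) is quoted from \cite{LLW} and part (ii) is declared a straightforward extension of the biharmonic case in \cite{LW24}, so there is no in-paper argument to compare line by line. Your plan for the first estimate in (i) and for (ii) is essentially sound and close in spirit to what those references do: splitting $G$ via \eqref{eq:Green1}, using the limiting-absorption bound $\|R_0\|_{L^2(D)\to L^2(B)}\lesssim\kappa^{-1}$ for the resonant factor and the uniform bound $\bigl||\xi|^2-\kappa_j^2\bigr|\gtrsim\kappa^2$ for $j\ge1$ gives the base estimate $\kappa^{-(2n-1)}$; interior regularity for $(-\Delta)^n$, duality, and interpolation then produce the two-index family, and the factorization through the Hilbert scale with a strictly subcritical Sobolev embedding yields the compactness in (ii). One small repair: iterating the elliptic estimate against the $L^2$ bound only gives $\|\mathcal H_\kappa\|_{L^2(D)\to H^{2n}(B)}\lesssim\kappa$; the intermediate powers $\kappa^{s_2-(2n-1)}$ for $0<s_2<2n$ do not follow from iteration alone but from interpolating this endpoint with the $L^2\to L^2$ bound. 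Since you invoke interpolation anyway, this is cosmetic.

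The genuine gap is in the $L^\infty$ statement. Your route $H^{-s}(D)\to H^{\frac{d+\epsilon}2}(B)\hookrightarrow L^\infty(B)$ requires the Sobolev bound with total index $s+\frac{d+\epsilon}2$, but $\mathcal H_\kappa$ gains at most $2n$ derivatives in total: its symbol decays only like $|\xi|^{-2n}$, and testing with $\chi(y)e^{{\rm i}\lambda y_1}$ shows that the $H^{-s_1}(D)\to H^{s_2}(B)$ bound fails for overlapping $D$ and $B$ (the situation in which the lemma is later applied, since $\mathrm{supp}\,\rho\subset B$) as soon as $s_1+s_2>2n$. Hence your argument covers only $s\le 2n-\frac{d+\epsilon}2$, short of the stated range $s\in(0,2n-1)$: for $d=2$ you miss a sliver of length $\epsilon/2$, and for $d=3$ roughly the interval $\bigl(2n-\frac{3+\epsilon}2,\,2n-1\bigr)$. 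Reaching the top of the range requires a direct kernel estimate, i.e., bounding $\sup_{x\in B}\|G(x,\cdot,\kappa)\|_{H^{s}(D)}$ and exploiting that the diagonal singularity of $G$ is only of type $|x-y|^{2n-d}$ (with a logarithm when $2n-d$ is even); in two dimensions this gives exactly $s<2n-1$, whereas in three dimensions, when $B$ overlaps $D$, the $|x-y|^{2n-3}$ term caps the admissible $s$ at $2n-\frac32$, so the upper part of the stated range is delicate independently of your method. In short, (i) first claim and (ii) go through modulo routine interpolation and embedding technicalities, but the $L^\infty$ claim cannot be obtained by the Sobolev-embedding shortcut over the full stated range and needs a separate kernel-based argument.
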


The proof of Lemma \ref{lm:H} (i) can be found in \cite[Lemma 3.1]{LLW}. The proof of Lemma \ref{lm:H} (ii) is a straightforward extension of the biharmonic case with $n=2$ provided in \cite[Lemma 2.2]{LW24}, and therefore, it is omitted here.

\begin{lemma}\label{lm:K}
Let $B$ be a bounded domain in $\mathbb{R}^d$ with a locally Lipschitz boundary.
\begin{itemize}
\item[(i)] For any $q\in(2,A)$ and $\gamma\in (\frac{d-m}{2},n-\frac{1}{2}-d(\frac12-\frac{1}{q}))$, where 
\[
A:= 
\begin{cases}
\frac{2d}{2d-m-2n+1}, &  2d-m-2n+1>0, \\ 
\infty, &   2d-m-2n+1\leq 0, 
\end{cases}
\]
the operator $\mathcal K_\kappa$ is compact from $W^{\gamma,q}(B)$ to itself and satisfies 
\begin{align*}
	\|\mathcal K_\kappa\|_{\mathcal L(W^{\gamma,q}(B))}\lesssim \kappa^{2\gamma-2n+1+d(1-\frac{2}{q})}.
\end{align*}

\item[(ii)]
For any $s\in(\frac{d-m}{2},\frac{2n-1}2)$, the operator $\mathcal K_\kappa$ is bounded from $H^s(B)$ to itself and satisfies
\begin{align*}
\|\mathcal K_\kappa\|_{\mathcal L(H^s(B))}\lesssim \kappa^{2s-2n+1}.
\end{align*}
Moreover, for any $s\in(\frac{d-m}{2},2n-1)$ and $\epsilon>0$, the operator $\mathcal K_\kappa$ is bounded from $H^{s}(B)$ to $L^\infty(B)$, satisfying 
\begin{align*}
\|\mathcal K_\kappa\|_{\mathcal L(H^{s}(B),L^\infty(B))}\lesssim \kappa^{s-2n+1+\frac{d+\epsilon}2}. 
\end{align*}
\end{itemize}
\end{lemma}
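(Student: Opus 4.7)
The plan is to decompose $\mathcal K_\kappa\phi = \mathcal H_\kappa(\rho\phi)$ and chain a Sobolev product estimate for the multiplication $\phi \mapsto \rho\phi$ with the mapping properties of $\mathcal H_\kappa$ already recorded in Lemma \ref{lm:H}. The only information about the potential enters through Lemma \ref{lm:rho}, which supplies $\rho \in W^{(m-d)/2-\epsilon, p}(D)$ $\mathbb P$-a.s.\ for arbitrary $p>1$ and $\epsilon>0$ arbitrarily small, together with the compact support of $\rho$ in $D$.

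For the norm bound in part (i), I would take $s_1 = s_2 = \gamma + d(\frac12 - \frac1q)$, both nonnegative since $\gamma > (d-m)/2 \geq 0$ and $q>2$. The assumed upper endpoint $\gamma < n - \frac12 - d(\frac12 - \frac1q)$ rewrites as $s_1 + s_2 < 2n-1$, so Lemma \ref{lm:H}(i) applies and yields $\|\mathcal H_\kappa\|_{\mathcal L(H^{-s_1}(D),H^{s_2}(B))} \lesssim \kappa^{2\gamma - 2n + 1 + d(1-2/q)}$. Since $q \geq 2$, Sobolev embedding gives $H^{s_2}(B) \hookrightarrow W^{\gamma, q}(B)$. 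A Sobolev multiplication estimate, admissible because $\gamma > (d-m)/2$ and $\rho$ is compactly supported in $D$, delivers $\|\rho\phi\|_{H^{-s_1}(D)} \lesssim \|\phi\|_{W^{\gamma, q}(B)}$ with a $\mathbb P$-a.s.\ finite constant, and composition produces the stated operator bound. For compactness, I would invoke Lemma \ref{lm:H}(ii) in place of (i): with $q > 2$ and the upper bound on $\gamma$, the operator $\mathcal H_\kappa : W^{-\gamma, p}(D) \to W^{\gamma, q}(B)$ is compact (with $1/p+1/q=1$), which combined with the boundedness of $\phi \mapsto \rho\phi$ from $W^{\gamma, q}(B)$ into $W^{-\gamma, p}(D)$ yields compactness of $\mathcal K_\kappa$ on $W^{\gamma, q}(B)$. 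The nonemptiness of the admissible range for $\gamma$, i.e., $(d-m)/2 < n - \frac12 - d(\frac12 - \frac1q)$, is precisely the upper bound $q < A$ in the statement.

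Part (ii) follows by the same strategy in the Hilbert scale. With $s_1 = s_2 = s \in ((d-m)/2, (2n-1)/2)$, Lemma \ref{lm:H}(i) is activated and a Sobolev product estimate provides $\|\rho\phi\|_{H^{-s}(D)} \lesssim \|\phi\|_{H^s(B)}$ $\mathbb P$-a.s., yielding the factor $\kappa^{2s-2n+1}$. For the $L^\infty$ output bound, I would instead apply the second estimate in Lemma \ref{lm:H}(i), which maps $H^{-s}(D)$ into $L^\infty(B)$ with norm $\lesssim \kappa^{s - 2n + 1 + (d+\epsilon)/2}$ for $s \in ((d-m)/2, 2n-1)$, composed with the same multiplication estimate.

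The main obstacle is the Sobolev product estimate itself: one must verify (via a Runst--Sickel type multiplication theorem or a paraproduct decomposition, crucially exploiting the compact support of $\rho$ in $D$ to absorb integrability losses) that the pointwise product of the rough distribution $\rho \in W^{(m-d)/2-\epsilon, p}$ and $\phi$ in the relevant Sobolev space is a well-defined element of the required negative Sobolev space, with a constant that is $\mathbb P$-a.s.\ finite and independent of $\kappa$. The regularity threshold $\gamma > (d-m)/2$ in (i) and $s > (d-m)/2$ in (ii) are exactly the admissibility conditions for this multiplication, and their interplay with the integrability exponent $q$ dictates the admissible range of $\gamma$ and the upper bound $A$ on $q$.
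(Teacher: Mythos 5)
Your proposal follows essentially the same route as the paper, which itself only sketches the argument: write $\mathcal K_\kappa\phi=\mathcal H_\kappa(\rho\phi)$, combine the mapping properties of $\mathcal H_\kappa$ from Lemma \ref{lm:H} with the a.s. regularity $\rho\in W^{\frac{m-d}{2}-\epsilon,p}(D)$ from Lemma \ref{lm:rho} via a Runst--Sickel type product estimate, exactly as in the cited biharmonic case \cite[Lemma 2.3]{LW24}. Your exponent bookkeeping (the choice $s_1=s_2=\gamma+d(\tfrac12-\tfrac1q)$, the use of Lemma \ref{lm:H}(ii) for compactness, and the identification of $q<A$ with nonemptiness of the $\gamma$-interval) is consistent with the statement and with the paper's remarks.
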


It is worth mentioning that the condition $m>d-2n+1$ in Assumption \ref{as:rho} ensures that the intervals $(2,A)$ and $(\frac{d-m}2,n-\frac12-d(\frac12-\frac1q))$ in Lemma \ref{lm:K} are not empty. The proof of Lemma \ref{lm:K} extends the case of the biharmonic operator with $n=2$ as presented in \cite[Lemma 2.3]{LW24}. It utilizes the relation $\mathcal K_\kappa\phi=\mathcal H_\kappa(\rho\phi)$ alongside the properties of $\mathcal H_\kappa$ outlined in Lemma \ref{lm:H}. For brevity, the details of the proof are omitted here.

\subsection{Well-posedness of the direct problem}

Based on the integral operator $\mathcal K_\kappa$, we can formally rewrite the scattering problem \eqref{eq:model}--\eqref{eq:radia} as the Lippmann--Schwinger integral equation
\begin{equation}\label{eq:LS}
u-\mathcal K_\kappa u=u^{\rm i}, 
\end{equation}
where $u^{\rm i}$ is the incident wave field defined in \eqref{uinc}. 

\begin{lemma}\label{lm:LS}
For any bounded domain $B\subset\mathbb R^d$ with a locally Lipschitz boundary, the Lippmann--Schwinger equation \eqref{eq:LS} admits a unique solution $u\in W^{\gamma,q}(B)$ for sufficiently large $\kappa$, where $\gamma$ and $q$ satisfy the conditions specified  in Lemma \ref{lm:K}.
\end{lemma}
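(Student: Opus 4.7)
The plan is to treat \eqref{eq:LS} as a fixed-point problem $u=u^{\rm i}+\mathcal K_\kappa u$ in the Banach space $W^{\gamma,q}(B)$ and to invert $I-\mathcal K_\kappa$ by a Neumann series, using the wavenumber decay of $\|\mathcal K_\kappa\|$ proved in Lemma \ref{lm:K}(i). Since $u^{\rm i}(x,\theta,\kappa)=e^{{\rm i}\kappa x\cdot\theta}$ is smooth on the bounded Lipschitz set $B$, the right-hand side lies trivially in $W^{\gamma,q}(B)$ for any $\gamma,q$, so the only real content is the invertibility of $I-\mathcal K_\kappa$.

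The first step is to observe that Lemma \ref{lm:K}(i) gives, under the stated conditions on $\gamma$ and $q$,
\[
\|\mathcal K_\kappa\|_{\mathcal L(W^{\gamma,q}(B))}\lesssim \kappa^{\alpha},\qquad \alpha:=2\gamma-2n+1+d\Bigl(1-\tfrac{2}{q}\Bigr).
\]
The key algebraic check is that $\alpha<0$. From the upper bound $\gamma<n-\tfrac12-d(\tfrac12-\tfrac1q)$ in Lemma \ref{lm:K}(i), one gets $2\gamma<2n-1-d+\tfrac{2d}{q}$, i.e., exactly $\alpha<0$. Hence there exists $\kappa_0>0$ such that $\|\mathcal K_\kappa\|_{\mathcal L(W^{\gamma,q}(B))}\le\tfrac12$ for all $\kappa\ge\kappa_0$.

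For such $\kappa$, the Neumann series $\sum_{j\ge0}\mathcal K_\kappa^{\,j}$ converges in operator norm, so $I-\mathcal K_\kappa$ is boundedly invertible on $W^{\gamma,q}(B)$, and the unique solution is
\[
u=(I-\mathcal K_\kappa)^{-1}u^{\rm i}=\sum_{j=0}^{\infty}\mathcal K_\kappa^{\,j} u^{\rm i}.
\]
Uniqueness in $W^{\gamma,q}(B)$ follows immediately because any solution $\tilde u$ of the homogeneous equation satisfies $\|\tilde u\|\le\tfrac12\|\tilde u\|$. Alternatively, since Lemma \ref{lm:K}(i) also states that $\mathcal K_\kappa$ is compact on $W^{\gamma,q}(B)$, one could appeal to the Fredholm alternative and reduce existence to injectivity, the latter again being secured by the smallness of $\|\mathcal K_\kappa\|$ for large $\kappa$; the Neumann series route is more direct.

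The main (and essentially only) obstacle is confirming the sign of the exponent $\alpha$, which is the reason for the precise upper bound on $\gamma$ imposed in Lemma \ref{lm:K}(i); everything else is a standard contraction/Neumann argument. No unique continuation, no regularity boot-strapping, and no probabilistic reasoning beyond the pathwise realization of $\rho$ entering $\mathcal K_\kappa$ is needed at this stage—randomness will only re-enter later when one averages over $\kappa$ to identify the microlocal strengths.
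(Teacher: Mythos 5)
Your argument is correct, and it rests on exactly the same quantitative ingredient as the paper: the operator-norm decay $\|\mathcal K_\kappa\|_{\mathcal L(W^{\gamma,q}(B))}\lesssim \kappa^{2\gamma-2n+1+d(1-\frac{2}{q})}$ from Lemma \ref{lm:K}, together with the sign check that the upper bound $\gamma<n-\frac12-d(\frac12-\frac1q)$ forces the exponent to be negative. The logical packaging differs slightly. The paper takes the Fredholm route: it invokes the compactness of $\mathcal K_\kappa$ on $W^{\gamma,q}(B)$, so that existence and uniqueness reduce to showing the homogeneous equation $u_*-\mathcal K_\kappa u_*=0$ has only the trivial solution, and then kills $u_*$ by the same norm estimate for large $\kappa$. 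You instead invert $I-\mathcal K_\kappa$ directly by a Neumann series once $\|\mathcal K_\kappa\|\le\frac12$, which yields existence, uniqueness, and a bound on $(I-\mathcal K_\kappa)^{-1}$ in one stroke and does not use compactness at all (you correctly note the Fredholm alternative as an optional variant). Your route is marginally more economical and dovetails with what the paper does later anyway, since the convergence of the Born series in Section \ref{sec:inverse} is precisely your Neumann series for $u^{\rm i}$; the paper's route has the mild advantage that it would still apply in regimes where one only knows injectivity rather than norm smallness. Either way, the proof is sound and no gap remains.
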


\begin{proof}
By Lemma \ref{lm:K}, the operator $\mathcal K_\kappa:W^{\gamma,q}(B)\to W^{\gamma,q}(B)$ is compact. Moreover, the incident wave field $u^{\rm i}$ is smooth and bounded in $B$, which implies $u^{\rm i}\in W^{\gamma,q}(B)$. 

It suffices to show that the homogeneous equation $u-\mathcal K_\kappa u=0$ has only the trivial solution $u\equiv0$ in $W^{\gamma,q}(B)$. In fact, if $u_*\in W^{\gamma,q}(B)$ satisfies $u_*-\mathcal K_\kappa u_*=0$, then
\[
\|u_*\|_{W^{\gamma,q}(B)}\le\|\mathcal K_\kappa\|_{\mathcal L(W^{\gamma,q}(B))}\|u_*\|_{W^{\gamma,q}(B)}
\lesssim \kappa^{2\gamma-2n+1+d(1-\frac{2}{q})}\|u_*\|_{W^{\gamma,q}(B)},
\]
which implies $u_*\equiv0$ for sufficiently large wavenumbers due to the fact that $2\gamma-2n+1+d(1-\frac{2}{q})<0$, thereby completing the proof.
\end{proof}

\begin{theorem}\label{tm:direct}
For any bounded domain $B\subset\mathbb R^d$ with a locally Lipschitz boundary, the scattering problem \eqref{eq:model}--\eqref{eq:radia} admits a unique solution $u\in W^{\gamma,q}(B)$ in the distributional sense  for sufficiently large $\kappa$, where $\gamma$ and $q$ satisfy the conditions stated in Lemma \ref{lm:K}.
\end{theorem}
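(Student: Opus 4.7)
The plan is to leverage the equivalence between the scattering problem \eqref{eq:model}--\eqref{eq:radia} and the Lippmann--Schwinger equation \eqref{eq:LS}, whose unique solvability in $W^{\gamma,q}(B)$ has already been established in Lemma \ref{lm:LS}. First, I would show that the solution $u$ of \eqref{eq:LS} is indeed a distributional solution of \eqref{eq:model}. Setting $u^{\rm s}:=u-u^{\rm i}=\mathcal K_\kappa u=\mathcal H_\kappa(\rho u)$ and applying the operator $(-\Delta)^n-\kappa^{2n}$ distributionally, the defining identity $((-\Delta)^n-\kappa^{2n})G(\cdot,y,\kappa)=-\delta_y$ yields $((-\Delta)^n-\kappa^{2n})u^{\rm s}=-\rho u$. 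Since $u^{\rm i}(x,\theta,\kappa)=e^{{\rm i}\kappa x\cdot\theta}$ satisfies $(-\Delta)^n u^{\rm i}=\kappa^{2n}u^{\rm i}$ in $\mathbb R^d$ (recall $\kappa=k^{1/n}$), adding the two gives $(-\Delta)^n u-k^2u+\rho u=0$ in $\mathcal D'(\mathbb R^d)$, which is \eqref{eq:model}.

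Second, I would verify the radiation condition \eqref{eq:radia}. Using the decomposition of the Green function in \eqref{eq:Green1}--\eqref{eq:Phi}, split
\[
u^{\rm s}(x)=-\frac{1}{n\kappa^{2n}}\sum_{j=0}^{n-1}\kappa_j^2\int_{D}\Phi(x,y,\kappa_j)(\rho u)(y)\,dy.
\]
For $j\ge1$, the identity \eqref{eq:Im_kappa} gives $\Im[\kappa_j]>0$, so $\Phi(\cdot,y,\kappa_j)$ and its gradient decay exponentially as $|x|\to\infty$, making those contributions satisfy \eqref{eq:radia} trivially. For $j=0$, the integral reproduces the standard Helmholtz volume potential with wavenumber $\kappa_0=\kappa$, which satisfies the classical Sommerfeld radiation condition. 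Summing the pieces yields \eqref{eq:radia}.

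Third, for uniqueness in the distributional sense, I would argue that any $u\in W^{\gamma,q}(B)$ solving \eqref{eq:model}--\eqref{eq:radia} must satisfy \eqref{eq:LS}. Set $u^{\rm s}=u-u^{\rm i}$, so $((-\Delta)^n-\kappa^{2n})u^{\rm s}=-\rho u$ in $\mathbb R^d$ with $\rho u$ supported in $D$. Define $w:=u^{\rm s}-\mathcal H_\kappa(\rho u)$; then $((-\Delta)^n-\kappa^{2n})w=0$ in $\mathbb R^d$ and, by the first step applied to $\mathcal H_\kappa(\rho u)$, $w$ itself satisfies \eqref{eq:radia}. Decomposing $w=\sum_{j=0}^{n-1}v_j$ via the construction of subsection \ref{sec:radia} (so that $(\Delta+\kappa_j^2)v_j=0$ in $\mathbb R^d\setminus\overline{B_R}$), the components $v_j$ with $j\ge1$ decay exponentially and hence vanish identically by ellipticity, while $v_0$ is an entire outgoing Helmholtz solution and vanishes by Rellich's lemma. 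Thus $w\equiv0$, so $u=u^{\rm i}+\mathcal K_\kappa u$, and uniqueness follows from Lemma \ref{lm:LS}.

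The main obstacle is the rigorous justification of these manipulations under the very low regularity of $\rho u$: the product $\rho u$ must be interpreted as a distribution (through the duality built into $\mathcal K_\kappa$), and the Rellich-type outgoing uniqueness must be applied to components $v_j$ that are only defined distributionally in a neighborhood of infinity. These points are handled by the mapping properties of $\mathcal H_\kappa$ and $\mathcal K_\kappa$ in Lemmas \ref{lm:H} and \ref{lm:K}, which ensure that all potentials involved are in fact smooth (even real-analytic) away from $\overline{D}$, so the classical radiation and Rellich arguments apply on $\mathbb R^d\setminus\overline{B_R}$.
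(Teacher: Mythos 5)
Your existence half matches the paper: the paper likewise verifies that the solution of \eqref{eq:LS} solves \eqref{eq:model} distributionally by applying $(-\Delta)^n-\kappa^{2n}$ under the integral and using the defining property of $G$, and the radiation condition of $\mathcal H_\kappa(\rho u)$ is indeed inherited from the splitting \eqref{eq:Green1}--\eqref{eq:Phi}. For uniqueness, however, you take a different route from the paper: you form $w=u^{\rm s}-\mathcal H_\kappa(\rho u)$ and try to kill it by a Liouville/Rellich argument on its Helmholtz components, whereas the paper never forms $w$; it proves directly, via the Green-identity computation \eqref{eq:us} over $B_r$, that any radiating solution of \eqref{eq:homo} satisfies the homogeneous Lippmann--Schwinger equation, the boundary terms being eliminated by Cauchy--Schwarz together with the radiation conditions \eqref{eq:radia2} for both $G$ and $u^{\rm s}_*$ and the Colton--Kress type bound $\int_{\partial B_r}|(-\Delta)^jf|^2\,d\gamma\lesssim1$, and then invokes Lemma \ref{lm:LS}.

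Your route contains a genuine gap at the step ``the components $v_j$ with $j\ge1$ decay exponentially and hence vanish identically by ellipticity.'' Since $w$ solves the homogeneous equation in all of $\mathbb R^d$, each $v_j$ is an \emph{entire} solution of $(\Delta+\kappa_j^2)v_j=0$, and such solutions need not decay at all: $e^{{\rm i}\kappa_j x\cdot\theta}$ is entire and grows exponentially in half the directions, and for $n$ even and $j=n/2$ the equation is $(\Delta-\kappa^2)v=0$, whose entire solutions include $e^{\kappa x_1}$. Ellipticity gives smoothness, not decay. The exponential decay invoked in subsection \ref{sec:radia} is a property of the outgoing volume-potential representation of the scattered field, which is precisely what is \emph{not} yet available for your $w$; borrowing it here is circular. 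Consequently both conclusions---$v_j\equiv0$ for $j\ge1$, and that $v_0$ is an outgoing Helmholtz solution to which Rellich's lemma applies---must be extracted from the single condition \eqref{eq:radia} satisfied by $w$ (e.g.\ by componentwise Green identities controlling $\Im\int_{\partial B_r}\bar v_j\,\partial_\nu v_j$, or by an argument of the paper's representation type), and your proposal never does this. If that step were supplied, the remainder (Rellich for $v_0=w$, then Lemma \ref{lm:LS}) would give a legitimate alternative proof; as written, the uniqueness argument is incomplete.
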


\begin{proof}
To establish the existence of the solution, we demonstrate that the solution $u\in W^{\gamma,q}(B)$ of \eqref{eq:LS} is also a distributional solution of \eqref{eq:model}--\eqref{eq:radia}. In fact, since $u$ satisfies
\[
u(x,\theta,\kappa)=\int_{\mathbb R^d}G(x,y,\kappa)\rho(y)u(y,\theta,\kappa)dy+u^{\rm i}(x,\theta,\kappa),
\]
we obtain for any $\phi\in\mathcal D(B)$ that
\begin{align*}
&\left\langle\left((-\Delta)^n-\kappa^{2n}\right)u+\rho u,\phi\right\rangle\\
=&\left\langle\left((-\Delta)^n-\kappa^{2n}\right)\left(\int_{\mathbb R^d}G(\cdot,y,\kappa)\rho(y)u(y,\theta,\kappa)dy+u^{\rm i}(\cdot,\theta,\kappa)\right)+\rho u,\phi\right\rangle\\
=&\int_{\mathbb R^d}\left\langle\left((-\Delta)^n-\kappa^{2n}\right)G(\cdot,y,\kappa),\phi\right\rangle\rho(y)u(y,\theta,\kappa)dy+\langle\rho u,\phi\rangle+\left\langle\left((-\Delta)^n-\kappa^{2n}\right)u^{\rm i},\phi\right\rangle\\
=&-\langle\rho u,\phi\rangle+\langle\rho u,\phi\rangle=0
\end{align*}
based on an extension argument (cf. \cite[Theorem 2 in Section 2.4.2]{RS11}), where we used the fact that $((-\Delta)^n-\kappa^{2n})u^{\rm i}=0$.

To show the uniqueness of the solution, we split the total field $u$ into the scattered field $u^{\rm s}$ and the incident field $u^{\rm i}$, i.e., $u=u^{\rm s}+u^{\rm i}$, rewriting \eqref{eq:model} as
\[
\left((-\Delta)^n-\kappa^{2n}\right)u^{\rm s}+\rho u^{\rm s}=-\rho u^{\rm i}\quad\text{in}~\mathbb R^d, 
\]
where the scattered wave $u^{\rm s}$ satisfies the radiation condition \eqref{eq:radia}. It suffices to show that the homogeneous equation
\begin{align}\label{eq:homo}
\left((-\Delta)^n-\kappa^{2n}\right)u^{\rm s}_*+\rho u^{\rm s}_*=0\quad\text{in}~\mathbb R^d
\end{align}
with the radiation condition \eqref{eq:radia} has only the trivial solution $u^{\rm s}_*\equiv0$ in $W^{\gamma,q}(B)$.

We claim that any solution of \eqref{eq:homo} is also a solution of the homogeneous Lippmann--Schwinger integral equation $u^{\rm s}_*-\mathcal K_\kappa u^{\rm s}_*=0$. Denote by $B_r$ an open ball with radius $r\gg1$. For any fixed $x\in\mathbb R^d$, the solution $u^{\rm s}_*$ of \eqref{eq:homo} satisfies
\begin{align}\label{eq:us}
&-\int_{B_r}G(x,y,\kappa)\rho(y)u^{\rm s}_*(y,\theta,\kappa)dy+u^{\rm s}_*(x,\theta,\kappa)\notag\\
&=\int_{B_r}G(x,y,\kappa)\left((-\Delta)^n-\kappa^{2n}\right)u^{\rm s}_*(y,\theta,\kappa)dy-\int_{B_r}u^{\rm s}_*(y,\theta,\kappa)\left((-\Delta)^n-\kappa^{2n}\right)G(x,y,\kappa)dy\notag\\
&=\int_{B_r}G(x,y,\kappa)(-\Delta)^nu^{\rm s}_*(y,\theta,\kappa)dy-\int_{B_r}u^{\rm s}_*(y,\theta,\kappa)(-\Delta)^nG(x,y,\kappa)dy\notag\\
&=\sum_{j=0}^{n-1}\int_{\partial B_r}\Big[\left(\partial_\nu(-\Delta)^jG(x,y,\kappa)\right)(-\Delta)^{n-1-j}u^{\rm s}_*(y,\theta,\kappa)\notag\\
&\qquad -(-\Delta)^jG(x,y,\kappa)\left(\partial_\nu(-\Delta)^{n-1-j}u^{\rm s}_*(y,\theta,\kappa)\right)\Big]d\gamma(y)\notag\\
&=\sum_{j=0}^{n-1}\int_{\partial B_r}\Big[\left(\partial_\nu(-\Delta)^jG(x,y,\kappa)-{\rm i}\kappa(-\Delta)^jG(x,y,\kappa)\right)(-\Delta)^{n-1-j}u^{\rm s}_*(y,\theta,\kappa)\notag\\
&\qquad -(-\Delta)^jG(x,y,\kappa)\left(\partial_\nu(-\Delta)^{n-1-j}u^{\rm s}_*(y,\theta,\kappa)-{\rm i}\kappa(-\Delta)^{n-1-j}u^{\rm s}_*(y,\theta,\kappa)\right)\Big]d\gamma(y).
\end{align}
Noting that both $G$ and $u^{\rm s}_*$ satisfy the radiation condition \eqref{eq:radia}, or equivalently \eqref{eq:radia2}, we can follow the same procedure as provided in \cite[Theorem 3.3]{CK13} to obtain 
\[
\lim_{r\to\infty}\int_{\partial B_r}|(-\Delta)^jf(y)|^2d\gamma(y)\lesssim1,\quad j=0,1,\cdots,n-1
\]
for $f=G(x,\cdot,\kappa)$ or $f=u^{\rm s}_*(\cdot,\theta,\kappa)$. Letting $r\to\infty$ in \eqref{eq:us}, we get
\[
-\int_{\mathbb R^d}G(x,y,\kappa)\rho(y)u^{\rm s}_*(y,\theta,\kappa)dy+u^{\rm s}_*(x,\theta,\kappa)=0,
\]
which verifies the claim. 

It follows from Lemma \ref{lm:LS} that the homogeneous Lippmann--Schwinger integral equation has only the trivial solution $u^{\rm s}_*\equiv0$, thereby completing the proof.
\end{proof}

\section{The inverse scattering problem}\label{sec:inverse}

This section is dedicated to discussing the uniqueness of the inverse scattering problem. We begin by introducing the Born sequence, which is defined through the equivalent Lippmann--Schwinger integral equation 
\eqref{eq:LS}, ensuring that the Born series converges to the exact solution. Subsequently, detailed estimates for the Born series are provided to establish the uniqueness of the inverse scattering problem by utilizing far-field patterns of the scattered wave.

\subsection{The Born sequence}

It is shown in Theorem \ref{tm:direct} that the scattering problem (\ref{eq:model})--(\ref{eq:radia}) admits a unique solution $u$, which satisfies the equivalent Lippmann--Schwinger integral equation \eqref{eq:LS}.
Define $u_0(x,\theta,\kappa):=u^{\rm i}(x,\theta,\kappa)$ and the Born sequence based on \eqref{eq:LS} as follows:
\begin{equation}\label{eq:Born}
u_j(x,\theta,\kappa):=\left(\mathcal K_{\kappa}u_{j-1}(\cdot,\theta,\kappa)\right)(x),\quad j\geq 1.
\end{equation}

Let $B\subset\mathbb R^d$ be any bounded domain with a locally Lipschitz boundary, and let $\gamma$ and $q$ satisfy the conditions specified in Lemma \ref{lm:K}. We claim that the Born series defined in \eqref{eq:Born} converges to the solution of \eqref{eq:LS} in $W^{\gamma,q}(B)$:
\[
u(x,\theta,\kappa)=\sum_{j=0}^{\infty}u_j(x,\theta,\kappa)
\]
for a sufficiently large wavenumber $\kappa$. By Lemma \ref{lm:K}, the convergence of the Born series follows directly from
\begin{align*}
\bigg\|\sum_{j=N_1}^{N_2}u_j(\cdot,\theta,\kappa)\bigg\|_{W^{\gamma,q}(B)}
&\le \sum_{j=N_1}^{N_2}\|\mathcal K_\kappa^ju_0(\cdot,\theta,\kappa)\|_{W^{\gamma,q}(B)}\\
&\lesssim \sum_{j=N_1}^{N_2}\kappa^{(2\gamma-2n+1+d(1-\frac2q))j}\|u^{\rm i}(\cdot,\theta,\kappa)\|_{W^{\gamma,q}(B)}\to0
\end{align*}
as $N_1,N_2\to\infty$ since $\gamma<n-\frac12-d(\frac12-\frac1q)$. We denote the limit of the series by $u^*(x,\theta,\kappa):=\sum_{j=0}^{\infty}u_j(x,\theta,\kappa)$, which satisfies \eqref{eq:LS} due to
\[
\mathcal K_\kappa u^*=\sum_{j=0}^{\infty}\mathcal K_\kappa u_j=\sum_{j=1}^{\infty}u_j=u^*-u_0,
\]
thus verifying the claim. The scattered wave $u^{\rm s}=u-u^{\rm i}$ can be expressed as
\begin{equation*}
u^{\rm s}(x,\theta,\kappa)=\sum_{j=1}^{\infty}u_j(x,\theta,\kappa).
\end{equation*} 

The far-field pattern of the scattered field $u^{\rm s}$ can be decomposed into the sum of the leading term $u_1^\infty$ and the residual term $b^\infty$: 
\begin{equation*}
u^{\infty}(\hat{x},\theta,\kappa)=u_1^{\infty}(\hat{x},\theta,\kappa)+b^{\infty}(\hat{x},\theta,\kappa),
\end{equation*}
where the residual term $b^\infty$ takes the form
\begin{equation}\label{eq:b}
b^{\infty}(\hat{x},\theta,\kappa)=\sum_{j=2}^{\infty}u_j^{\infty}(\hat{x},\theta,\kappa),
\end{equation}
with $u_j^{\infty}(\hat{x},\theta,\kappa)$ representing the far-field pattern of $u_j(x,\theta,\kappa)$ for $j\geq 1$.

\subsection{Analysis of the leading term}

Referring to (\ref{eq:Koperator}) and (\ref{eq:Born}), we derive the explicit expression of $u_1$ as follows:
\begin{equation}\label{eq:u1}
u_1(x,\theta,\kappa)=\int_{{\mathbb R}^d}G(x,y,\kappa)\rho(y)u_0(y,\theta,\kappa)dy.
\end{equation}
Considering that $e^{{\rm i}\kappa_j|x|}$ decays exponentially to zero as $|x|\to\infty$ if $j>0$ according to \eqref{eq:Im_kappa}, and combining it with the asymptotic behavior for large arguments of $\Phi$ given in \eqref{eq:Phi} as described in \cite[$(2.15), (3.105)$]{CK19}, we obtain the asymptotic behavior of the fundamental solution $G$:
\begin{equation}\label{eq:Ginf}
G(x,y,\kappa)=\frac{e^{{\rm i}\kappa |x|}}{|x|^{\frac{d-1}{2}}}\left[\frac{C_d}{n}\kappa^{-(2n-\frac{d+1}{2})}e^{-{\rm i}\kappa \hat{x}\cdot y}+O\left(\frac1{|x|}\right)\right],\quad |x|\to\infty, 
\end{equation}
where 
\begin{equation}\label{eq:Cd}
C_d:=\left\{
\begin{aligned}
&\frac{e^{{\rm i}\frac{\pi}4}}{\sqrt{8\pi}},\quad& d=2,\\
&\frac1{4\pi},\quad& d=3.
\end{aligned}
\right. 
\end{equation} 
Combining (\ref{eq:u1}) with (\ref{eq:Ginf}) and choosing $\theta=-\hat x$, we get 
\begin{align*}
u_1(\hat x,-\hat x,\kappa)&=\int_{\mathbb R^d}G(x,y,\kappa)\rho(y)e^{{\rm i}\kappa y\cdot(-\hat x)}dy\\
&=\frac{e^{{\rm i}\kappa |x|}}{|x|^{\frac{d-1}{2}}}\left[\frac{C_d}{n}\kappa^{-(2n-\frac{d+1}{2})}\int_{\mathbb R^d}e^{-2{\rm i}\kappa \hat{x}\cdot y}\rho(y)dy+O\left(\frac1{|x|}\right)\right],
\end{align*} 
which yields the corresponding backscattering far-field pattern of $u_1$:
\begin{equation}\label{eq:u1inf}
u_1^{\infty}(\hat{x},-\hat{x},\kappa)=\frac{C_d}{n}\kappa^{-(2n-\frac{d+1}{2})}\int_{{\mathbb R}^d}e^{-2{\rm i}\kappa \hat{x}\cdot y}\rho(y)dy.
\end{equation}

The following result illustrates the contribution of the leading term $u_1^\infty$ in determining the strengths $a_{\rho}^c$ and $a_{\rho}^r$.

\begin{theorem}\label{tm:u1}
Let the random potential $\rho$ satisfy Assumption \ref{as:rho}. 
For any fixed $\tau\geq 0$ and all $\hat{x}\in{\mathbb S}^{d-1},$ it holds that 
\begin{equation}\label{eq:u1_tm}
\begin{aligned}
\lim_{Q\to\infty}\frac{1}{Q}\int_Q^{2Q}\kappa^{m+4n-d-1}u_1^{\infty}(\hat{x},-\hat{x},\kappa+\tau)\overline{u_1^{\infty}(\hat{x},-\hat{x},\kappa)}d\kappa &=\frac{|C_d|^2}{n^22^{m}}\widehat{a^c_{\rho}}(2\tau\hat{x})\quad\mathbb P\text{-a.s.},\\
\lim_{Q\to\infty}\frac{1}{Q}\int_Q^{2Q}\kappa^{m+4n-d-1}u_1^{\infty}(\hat{x},-\hat{x},\kappa+\tau)u_1^{\infty}(-\hat{x},\hat{x},\kappa)d\kappa &= \frac{C_d^2}{n^22^{m}}\widehat{a^r_{\rho}}(2\tau\hat{x})\quad\mathbb P\text{-a.s.},
\end{aligned}
\end{equation}
where the constant $C_d$ is given in \eqref{eq:Cd}. 
\end{theorem}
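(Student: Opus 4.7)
The plan is to first establish convergence in expectation of the averaged quantity to the claimed right-hand side, and then upgrade to almost sure convergence via a variance estimate combined with the Borel--Cantelli lemma along a dyadic sequence. I describe the argument for the first identity in \eqref{eq:u1_tm}; the second follows by an identical procedure using the relation kernel in place of the covariance kernel.

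Writing $\widehat\rho(2\kappa\hat x):=\int_{\mathbb R^d}e^{-2{\rm i}\kappa\hat x\cdot y}\rho(y)dy$ and inserting \eqref{eq:u1inf}, the integrand of $J_Q:=\tfrac1Q\int_Q^{2Q}\cdots d\kappa$ factorizes as $\tfrac{|C_d|^2}{n^2}\kappa^{m+4n-d-1}(\kappa+\tau)^{-(2n-\frac{d+1}{2})}\kappa^{-(2n-\frac{d+1}{2})}\widehat\rho(2(\kappa+\tau)\hat x)\overline{\widehat\rho(2\kappa\hat x)}$. Taking expectation, the inner spatial integral equals $\iint e^{-2{\rm i}(\kappa+\tau)\hat x\cdot y+2{\rm i}\kappa\hat x\cdot z}K_\rho^c(y,z)dydz$. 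Substituting the symbol representation $K_\rho^c(y,z)=(2\pi)^{-d}\int e^{{\rm i}(y-z)\cdot\xi}\sigma_\rho^c(y,\xi)d\xi$ and performing the $z$-integration (which is rigorously justified by compact support of $\rho$ in $D$) yields the Dirac mass $(2\pi)^d\delta(\xi-2\kappa\hat x)$, collapsing the expression to $\int e^{-2{\rm i}\tau\hat x\cdot y}\sigma_\rho^c(y,2\kappa\hat x)dy$. The decomposition $\sigma_\rho^c(y,\xi)=a_\rho^c(y)|\xi|^{-m}+b_\rho^c(y,\xi)$ with $b_\rho^c\in\mathcal S^{-m-1}$ gives $(2\kappa)^{-m}\widehat{a_\rho^c}(2\tau\hat x)+O(\kappa^{-m-1})$. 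Using $\kappa^{4n-d-1}\kappa^{-(2n-\frac{d+1}{2})}(\kappa+\tau)^{-(2n-\frac{d+1}{2})}\to 1$ and $\kappa^{m}(2\kappa)^{-m}=2^{-m}$, the expected value of the integrand tends to $\tfrac{|C_d|^2}{n^2 2^m}\widehat{a_\rho^c}(2\tau\hat x)$ with remainder of order $\kappa^{-1}$, and dominated convergence delivers the same limit for $\mathbb E[J_Q]$.

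To upgrade to almost sure convergence I estimate $\mathrm{Var}(J_Q)=\tfrac1{Q^2}\iint_{[Q,2Q]^2}\mathrm{Cov}(F(\kappa),F(\kappa'))d\kappa d\kappa'$, where $F(\kappa)$ denotes the random integrand. Each covariance is a centered fourth moment of the complex Gaussian field $\rho$, which by Isserlis' theorem (Wick's formula) splits into three pairings, each a product of two-point kernels $K_\rho^c$ and $K_\rho^r$. Repeating the symbol-representation argument of the previous paragraph for each pairing, the result is an oscillatory integral whose phase carries a factor of the form $e^{\pm 2{\rm i}(\kappa-\kappa')\hat x\cdot y}$. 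Because $a_\rho^\eta$ and $b_\rho^\eta(\cdot,\xi)$ are in $C_0^\infty(D)$, repeated integration by parts in $y$ (with $|\hat x|=1$) produces arbitrary polynomial decay in $|\kappa-\kappa'|$, at the cost of mild polynomial growth in $\kappa,\kappa'$ controlled by the explicit prefactors and the symbol orders. This yields $\mathrm{Var}(J_Q)\lesssim Q^{-\alpha}$ for some $\alpha>0$. Chebyshev's inequality followed by the Borel--Cantelli lemma applied along the dyadic sequence $Q_j=2^j$ then gives $J_{Q_j}\to\tfrac{|C_d|^2}{n^2 2^m}\widehat{a_\rho^c}(2\tau\hat x)$ $\mathbb P$-a.s., and a standard interpolation argument between dyadic points, using the uniform bound on $F$, extends the limit to all $Q\to\infty$ in the spirit of \cite{CHL19,LLM21,LLW23}.

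The second identity is obtained from $u_1^\infty(-\hat x,\hat x,\kappa)=\tfrac{C_d}{n}\kappa^{-(2n-\frac{d+1}{2})}\widehat\rho(-2\kappa\hat x)$: the kernel $K_\rho^r$ and symbol $\sigma_\rho^r$ replace $K_\rho^c$ and $\sigma_\rho^c$ throughout, while the prefactor $|C_d|^2$ becomes $C_d^2$, producing $\tfrac{C_d^2}{n^2 2^m}\widehat{a_\rho^r}(2\tau\hat x)$. The main obstacle is the variance estimate: one must show uniformly across all Wick pairings that integration by parts delivers decay in $|\kappa-\kappa'|$ strong enough to compensate the growth in $\kappa+\kappa'$, so that after the $Q^{-2}$ prefactor and double integration over a square of area $Q^2$ the resulting bound is genuinely $o(1)$. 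Both the symbol estimates on $b_\rho^\eta\in\mathcal S^{-m-1}$ and the compact support of $a_\rho^\eta$ are essential for this step.
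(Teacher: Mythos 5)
Your first step (the expectation computation via the kernel--symbol relation) and your variance/Wick strategy are in substance the same route the paper takes: the paper also reduces the almost sure statement to a second-moment bound for the centered integrand, with the decisive input being correlation decay of the form $|\mathbb E[U_+(\kappa_1)\overline{U_+(\kappa_2)}]|\lesssim \kappa_1^{\frac{d+1}{2}-2n}\kappa_2^{\frac{d+1}{2}-2n-m}(1+|\kappa_1-\kappa_2|)^{-N}$ (it decomposes into real and imaginary parts and uses the real-Gaussian identity $\mathbb E[(W^2-\mathbb EW^2)(V^2-\mathbb EV^2)]=2(\mathbb E[WV])^2$, which is just the real-variable form of your Isserlis pairing), and these decays are indeed obtained from the oscillatory structure exactly as you indicate. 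So up to and including the bound $\mathrm{Var}(J_Q)\lesssim Q^{-\alpha}$ and Borel--Cantelli along a sparse sequence, your plan is sound and parallels the paper.

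The genuine gap is the final ``interpolation between dyadic points, using the uniform bound on $F$.'' First, $F(\kappa)=\kappa^{m+4n-d-1}u_1^\infty(\hat x,-\hat x,\kappa+\tau)\overline{u_1^\infty(\hat x,-\hat x,\kappa)}$ is not pathwise uniformly bounded in $\kappa$: it is a quadratic functional of a Gaussian field evaluated over an unbounded frequency range, and any a.s.\ bound on $\sup_\kappa$ requires a separate argument. Second, and more fundamentally, even if $|F|\le M$ pathwise, boundedness does not make $\frac1Q\int_Q^{2Q}F$ close to $\frac1{Q_j}\int_{Q_j}^{2Q_j}F$ for $Q\in[Q_j,Q_{j+1})$ when $Q_{j+1}=2Q_j$: for $Q$ near $2Q_j$ the windows $[Q,2Q]$ and $[Q_j,2Q_j]$ are nearly disjoint, so the two averages may differ by order $M$, not $o(1)$, and $F$ is neither nonnegative nor monotone, so the usual sandwich trick is unavailable. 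The filling-in only works when consecutive points satisfy $Q_{j+1}-Q_j=o(Q_j)$ together with a quantitative control of the supremum of the discrepancy over the gap. This is precisely how the paper proceeds (its Lemma \ref{lm:X}): it takes $Q_n=(n+1)^k$ with $k\beta>1$, so that $(Q_{n+1}-Q_n)/Q_n\lesssim 1/n$, and bounds $\mathbb E|Y_n|^2$ for $Y_n=\sup_{Q_n\le Q<Q_{n+1}}|\frac1Q\int_Q^{2Q}X_\kappa d\kappa-\xi_n|$ by $n^{-2}$ using the same correlation decay, then applies Borel--Cantelli a second time. Your argument can be repaired by replacing the dyadic sequence with such a polynomially spaced one and adding this sup-moment estimate; as written, the dyadic-plus-boundedness step fails.
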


\begin{proof}
For simplicity of notations, we define
$U_+(\kappa):=u_1^\infty(\hat x, -\hat x, \kappa)$, 
$U_-(\kappa):=u_1^\infty(-\hat x, \hat x, \kappa)$,
and $U_\pm(\kappa)=p_\pm(\kappa) +{\rm i} q_\pm(\kappa)$, where $p_\pm$ and $q_\pm$ are the real and imaginary parts of $U_\pm$, respectively. 

It follows from \eqref{eq:sigma_K} and (\ref{eq:u1inf}) that we have
\begin{align*}
&{\mathbb E}\left[U_+(\kappa+\tau)\overline{U_+(\kappa)} \right]\\
&=\frac{|C_d|^2}{n^2}(\kappa+\tau)^{\frac{d+1}{2}-2n}\kappa^{\frac{d+1}{2}-2n}\int_{{\mathbb R}^d}\int_{{\mathbb R}^d}e^{-2{\rm i}(\kappa+\tau)\hat{x}\cdot y}e^{2{\rm i}\kappa\hat{x}\cdot z}{\mathbb E}\left[\rho(y)\overline{\rho(z)}\right]dydz\\
&=\frac{|C_d|^2}{n^2}(\kappa+\tau)^{\frac{d+1}{2}-2n}\kappa^{\frac{d+1}{2}-2n}\int_{{\mathbb R}^d}\left[\int_{{\mathbb R}^d}K_{\rho}^c(y,z)e^{-{\rm i}2\kappa\hat{x}\cdot (y-z)}dz\right]e^{-{\rm i}2\tau\hat{x}\cdot y}dy\\
&=\frac{|C_d|^2}{n^2}(\kappa+\tau)^{\frac{d+1}{2}-2n}\kappa^{\frac{d+1}{2}-2n}\int_{{\mathbb R}^d}\sigma_{\rho}^c(y,2\kappa\hat{x})e^{-{\rm i}2\tau\hat{x}\cdot y}dy\\
&=\frac{|C_d|^2}{n^2}(\kappa+\tau)^{\frac{d+1}{2}-2n}\kappa^{\frac{d+1}{2}-2n}\left[\int_{{\mathbb R}^d}a_{\rho}^c(y)e^{-{\rm i}2\tau\hat{x}\cdot y}dy|2\kappa\hat{x}|^{-m}+\int_{{\mathbb R}^d}b_{\rho}^c(y,2\kappa\hat{x})e^{-{\rm i}2\tau\hat{x}\cdot y}dy\right]\\
&=\frac{|C_d|^2}{n^22^{m}}\left(\frac{\kappa}{\kappa+\tau}\right)^{2n-\frac{d+1}{2}}\kappa^{-(m+4n-d-1)}\widehat{a_{\rho}^c}(2\tau\hat{x})+O\left(\kappa^{-(m+4n-d)}\right), 
\end{align*}
and similarly
\begin{align*}
&{\mathbb E}\left[U_+(\kappa+\tau)U_-(\kappa) \right]\\
&=\frac{C_d^2}{n^2}(\kappa+\tau)^{\frac{d+1}{2}-2n}\kappa^{\frac{d+1}{2}-2n}\int_{{\mathbb R}^d}\int_{{\mathbb R}^d}e^{-2{\rm i}(\kappa+\tau)\hat{x}\cdot y}e^{2{\rm i}\kappa\hat{x}\cdot z}{\mathbb E}\left[\rho(y)\rho(z)\right]dydz\\
&=\frac{C_d^2}{n^2}(\kappa+\tau)^{\frac{d+1}{2}-2n}\kappa^{\frac{d+1}{2}-2n}\int_{{\mathbb R}^d}\left[\int_{{\mathbb R}^d}K_{\rho}^r(y,z)e^{-{\rm i}2\kappa\hat{x}\cdot (y-z)}dz\right]e^{-{\rm i}2\tau\hat{x}\cdot y}dy\\
&=\frac{C_d^2}{n^2}(\kappa+\tau)^{\frac{d+1}{2}-2n}\kappa^{\frac{d+1}{2}-2n}\int_{{\mathbb R}^d}\sigma_{\rho}^r(y,2\kappa\hat{x})e^{-{\rm i}2\tau\hat{x}\cdot y}dy\\
&=\frac{C_d^2}{n^22^{m}}\left(\frac{\kappa}{\kappa+\tau}\right)^{2n-\frac{d+1}{2}}\kappa^{-(m+4n-d-1)}\widehat{a_{\rho}^r}(2\tau\hat{x})+O\left(\kappa^{-(m+4n-d)}\right),
\end{align*}
where $m>d-2n+1$ such that $m+4n-d-1>0$.  Noting that
\begin{equation*}
\lim_{Q\to\infty}\frac{1}{Q}\int_Q^{2Q}\left(\frac{\kappa}{\kappa+\tau}\right)^{2n-\frac{d+1}{2}}d\kappa=1,
\end{equation*}
we obtain
\begin{equation}\label{eq:Eu1}
\begin{aligned}
\lim_{Q\to\infty}\frac{1}{Q}\int_Q^{2Q}\kappa^{m+4n-d-1}{\mathbb E}\left[U_+(\kappa+\tau)\overline{U_+(\kappa)}\right]d\kappa &= \frac{|C_d|^2}{n^22^{m}}\widehat{a^c_{\rho}}(2\tau\hat{x}),\\
\lim_{Q\to\infty}\frac{1}{Q}\int_Q^{2Q}\kappa^{m+4n-d-1}{\mathbb E}\left[U_+(\kappa+\tau)U_-(\kappa)\right]d\kappa &= \frac{C_d^2}{n^22^{m}}\widehat{a^r_{\rho}}(2\tau\hat{x}).
\end{aligned}
\end{equation}
To prove (\ref{eq:u1_tm}), together with (\ref{eq:Eu1}), it suffices to show that 
\begin{equation}\label{eq:Y}
\lim_{Q\to\infty}\frac{1}{Q}\int_{Q}^{2Q}Y^c(\hat{x},\kappa)d\kappa = 0,\quad  \lim_{Q\to\infty}\frac{1}{Q}\int_{Q}^{2Q}Y^r(\hat{x},\kappa)d\kappa = 0\quad\mathbb P\text{-a.s.,}
\end{equation}
where
\begin{align*}
& Y^c(\hat{x},\kappa):=\kappa^{m+4n-d-1}\left(U_+(\kappa+\tau)\overline{U_+(\kappa)}-{\mathbb E}\left[U_+(\kappa+\tau)\overline{U_+(\kappa)}\right]\right),\\
& Y^r(\hat{x},\kappa):=\kappa^{m+4n-d-1}\Big(U_+(\kappa+\tau)U_-(\kappa)-{\mathbb E}\big[U_+(\kappa+\tau)U_-(\kappa)\big]\Big).
\end{align*}

We obtain from a straightforward calculation that 
\begin{align}\label{eq:UV1}
U_+(\kappa+\tau)\overline{U_+(\kappa)}&=\frac{1+{\rm i}}{2}\left[p^2_+(\kappa)+p^2_+(\kappa+\tau)+q^2_+(\kappa)+q^2_+(\kappa+\tau)\right]\nonumber\\
&\quad -\frac{1}{2}\left[\left(p_+(\kappa+\tau)-p_+(\kappa)\right)^2+\left(q_+(\kappa+\tau)-q_+(\kappa)\right)^2\right]\nonumber\\
&\quad -\frac{\rm i}{2}\left[\left(p_+(\kappa+\tau)+q_+(\kappa)\right)^2+\left(q_+(\kappa+\tau)-p_+(\kappa)\right)^2\right],
\end{align}
and
\begin{align}\label{eq:UV2}
U_+(\kappa+\tau)U_-(\kappa)
&=\frac{1+{\rm i}}{2}\left[p^2_-(\kappa)+p^2_+(\kappa+\tau)+q^2_-(\kappa)+q^2_+(\kappa+\tau)\right]\nonumber\\
&\quad -\frac{1}{2}\left[\left(p_+(\kappa+\tau)-p_-(\kappa)\right)^2+\left(q_+(\kappa+\tau)+q_-(\kappa)\right)^2\right]\nonumber\\
&\quad -\frac{\rm i}{2}\left[\left(p_+(\kappa+\tau)-q_-(\kappa)\right)^2+\left(q_+(\kappa+\tau)-p_-(\kappa)\right)^2\right].
\end{align}
Denote the set $\Theta=\Theta_1\cup\Theta_2$, where 
\begin{align*}
&\Theta_1:=\{p_+(\kappa+\tau), p_+(\kappa), p_-(\kappa), q_+(\kappa+\tau), q_+(\kappa), q_-(\kappa)\}\\
&\Theta_2:=\{p_+(\kappa+\tau)-p_+(\kappa), p_+(\kappa+\tau)-p_-(\kappa), q_+(\kappa+\tau)-q_+(\kappa), \\
&\qquad\quad q_+(\kappa+\tau)+q_-(\kappa), p_+(\kappa+\tau)+q_+(\kappa), p_+(\kappa+\tau)-q_-(\kappa),\\
&\qquad\quad q_+(\kappa+\tau)-p_+(\kappa), q_+(\kappa+\tau)-p_-(\kappa)\}.
\end{align*}
By \eqref{eq:UV1} and \eqref{eq:UV2}, it is evident that both $Y^c(\hat{x},\kappa)$ and $Y^r(\hat{x},\kappa)$ are linear combinations of random fields in the set 
\[
\left\{X_{\kappa}:=\kappa^{m+4n-d-1}\left(W_{\kappa}^2-{\mathbb E}W_{\kappa}^2\right)~\big|~W_{\kappa}\in\Theta\right\}.
\] 
Therefore, to prove (\ref{eq:Y}), it is sufficient to show for any $W_{\kappa}\in\Theta$ that 
\begin{equation}\label{eq:X}
\lim_{Q\to\infty}\frac{1}{Q}\int_Q^{2Q}X_{\kappa}d\kappa = 0\quad\mathbb P\text{-a.s.}
\end{equation}

To show \eqref{eq:X}, according to Lemma \ref{lm:X} provided after this theorem, it suffices to show that there exist some constants $\mu,\alpha\ge0$, $\beta>0$, and $\kappa_*>1$ such that for any $\kappa>\kappa_*$, the following inequality holds:
\[
\left|{\mathbb E}\left[X_{\kappa}X_{\kappa+t}\right]\right|\lesssim\left(1+\frac{t}{\kappa}\right)^\alpha(1+|t-\mu|)^{-\beta}\quad\forall\,t\ge0. 
\]
Note that
\begin{align*}
\left|{\mathbb E}\left[X_{\kappa}X_{\kappa+t}\right]\right|
&=\left|\mathbb E\left[\kappa^{m+4n-d-1}(\kappa+t)^{m+4n-d-1}\left(W_{\kappa}^2-{\mathbb E}W_{\kappa}^2\right)\left(W_{\kappa+t}^2-{\mathbb E}W_{\kappa+t}^2\right)\right]\right|\\
&=2\left[{\mathbb E}\left(\kappa^{\frac{m+4n-d-1}{2}}(\kappa+t)^{\frac{m+4n-d-1}{2}}W_{\kappa}W_{\kappa+t}\right)\right]^2,
\end{align*}
where in the last step we used the result given in \cite[Lemma 3.4]{LLW23} for $W_{\kappa}\in\Theta$ being centered real-valued Gaussian random fields. Hence, we conclude that, to show \eqref{eq:X}, it suffices to show that there exist constants $\mu,\alpha\ge0$, $\beta>0$, and $\kappa_*>1$ such that 
\begin{equation}\label{eq:W}
\left[{\mathbb E}\left(\kappa^{\frac{m+4n-d-1}{2}}(\kappa+t)^{\frac{m+4n-d-1}{2}}W_{\kappa}W_{\kappa+t}\right)\right]\lesssim\left(1+\frac{t}{\kappa}\right)^\alpha(1+|t-\mu|)^{-\beta}\quad\forall\,t\ge0
\end{equation}
for any $\kappa>\kappa_*$ and $W_{\kappa}\in\Theta$.

First, we consider the case $W_{\kappa}\in\Theta_1$. It follows from the definitions of $p_\pm(\kappa)$ and 
$q_\pm(\kappa)$ that we have
\begin{align*}
p_+(\kappa_1)p_+(\kappa_2)&=\frac{1}{4}\left[U_+(\kappa_1)+\overline{U_+(\kappa_1)}\right]\left[U_+(\kappa_2)+\overline{U_+(\kappa_2)}\right],\\
q_+(\kappa_1)q_+(\kappa_2)&=-\frac{1}{4}\left[U_+(\kappa_1)-\overline{U_+(\kappa_1)}\right]\left[U_+(\kappa_2)-\overline{U_+(\kappa_2)}\right].
\end{align*}
Evidently, to show \eqref{eq:W} for $W_{\kappa}\in\Theta_1$, we require the following estimates:
\begin{equation}\label{eq:cor}
\begin{aligned}
\left|{\mathbb E}\left[U_+(\kappa_1)\overline{U_+(\kappa_2)}\right]\right| &\lesssim \kappa_1^{\frac{d+1}{2}-2n}\kappa_2^{\frac{d+1}{2}-2n-m}\left(1+|\kappa_1-\kappa_2|\right)^{-N},\\
\left|{\mathbb E}\left[U_+(\kappa_1)U_-(\kappa_2)\right]\right|& \lesssim \kappa_1^{\frac{d+1}{2}-2n}\kappa_2^{\frac{d+1}{2}-2n-m}\left(1+|\kappa_1-\kappa_2|\right)^{-N},\\
\left|{\mathbb E}\left[U_+(\kappa_1)U_+(\kappa_2)\right]\right|& \lesssim \kappa_1^{\frac{d+1}{2}-2n}\kappa_2^{\frac{d+1}{2}-2n-m}\left(1+\kappa_1+\kappa_2\right)^{-N},\\
\left|{\mathbb E}\left[U_+(\kappa_1)\overline{U_-(\kappa_2)}\right]\right| & \lesssim \kappa_1^{\frac{d+1}{2}-2n}\kappa_2^{\frac{d+1}{2}-2n-m}\left(1+\kappa_1+\kappa_2\right)^{-N},
\end{aligned}
\end{equation}
for any $\hat{x}\in{\mathbb S}^{d-1}$, $\kappa_1,\kappa_2\geq 1$, and any fixed $N\in{\mathbb N}$. The proof of these estimates is similar to that of \cite[Lemma 3.4]{LLW22b} and is omitted here.

The above estimates in \eqref{eq:cor}, along with the fact 
\[
(1+\kappa_1+\kappa_2)^{-N}\leq (1+|\kappa_1-\kappa_2|)^{-N}\quad\forall\,\kappa_1,\kappa_2\ge1
\]
for any $N\in{\mathbb N}$,  yield that 
\begin{align}\label{eq:U}
&\left|{\mathbb E}\left[\kappa^{\frac{m+4n-d-1}{2}}(\kappa+t)^{\frac{m+4n-d-1}{2}}p_+(\kappa)p_+(\kappa+t)\right]\right|\nonumber\\
&\lesssim \kappa^{\frac{m+4n-d-1}{2}}(\kappa+t)^{\frac{m+4n-d-1}{2}}\kappa^{\frac{d+1}{2}-2n}(\kappa+t)^{\frac{d+1}{2}-2n-m}(1+t)^{-N}\nonumber\\
&\le \left(1+\frac t\kappa\right)^{\frac{|m|}2}(1+t)^{-N}, 
\end{align}
and similarly
\begin{equation}\label{eq:V}
\left|{\mathbb E}\left[\kappa^{\frac{m+4n-d-1}{2}}(\kappa+t)^{\frac{m+4n-d-1}{2}}q_+(\kappa)q_+(\kappa+t)\right]\right|\lesssim \left(1+\frac t\kappa\right)^{\frac{|m|}2}(1+t)^{-N},
\end{equation}
which indicate that \eqref{eq:W} holds for $W_{\kappa}=p_+(\kappa)$ and $W_\kappa=q_+(\kappa)$.

Since (\ref{eq:U}) and (\ref{eq:V}) hold for any $\hat{x}\in{\mathbb S}^{d-1}$, replacing $\hat{x}$ with $-\hat{x}$ in (\ref{eq:U}) and (\ref{eq:V}), we conclude that (\ref{eq:W}) also holds for $W_{\kappa}=p_-(\kappa)$ and $W_\kappa=q_-(\kappa)$. 

Moreover, replacing $\kappa$ with $\kappa+\tau$ in (\ref{eq:U}) and (\ref{eq:V}) indicates that  
(\ref{eq:W}) holds for $W_{\kappa}=p_+(\kappa+\tau)$ and $W_\kappa=q_+(\kappa+\tau)$, which completes the proof of (\ref{eq:W}) for all $W_{\kappa}\in\Theta_1$.

Next, we consider the case where $W_{\kappa}\in\Theta_2$. Let us take $W_{\kappa}=p_+(\kappa+\tau)-q_-(\kappa)$ as an example, as the estimates for the others can be derived using a similar argument. Note that 
\begin{align*}
& p_+(\kappa+\tau)q_-(\kappa+t)\\
&=\frac{1}{4{\rm i}}\left[U_+(\kappa+\tau)+\overline{U_+(\kappa+\tau)}\right]\left[U_-(\kappa+t)-\overline{U_-(\kappa+t)}\right]\\
&=\frac{1}{4{\rm i}}\Big[U_+(\kappa+\tau)U_-(\kappa+t)-U_+(\kappa+\tau)\overline{U_-(\kappa+t)}\\
&\quad +\overline{U_+(\kappa+\tau)}U_-(\kappa+t)-\overline{U_+(\kappa+\tau)U_-(\kappa+t)}\Big],
\end{align*}
which, along with (\ref{eq:cor}), implies that 
\begin{equation*}
\left|{\mathbb E}\left[p_+(\kappa+\tau)q_-(\kappa+t)\right]\right|\lesssim(\kappa+\tau)^{\frac{d+1}{2}-2n}(\kappa+t)^{\frac{d+1}{2}-2n-m}(1+|t-\tau|)^{-N}.
\end{equation*}

Similarly, we can obtain that 
\begin{align*}
\left|{\mathbb E}\left[p_+(\kappa+\tau)p_+(\kappa+t+\tau)\right]\right|& \lesssim(\kappa+\tau)^{\frac{d+1}{2}-2n}(\kappa+t+\tau)^{\frac{d+1}{2}-2n-m}(1+t)^{-N},\\
\left|{\mathbb E}\left[p_+(\kappa+t+\tau)q_-(\kappa)\right]\right|& \lesssim(\kappa+t+\tau)^{\frac{d+1}{2}-2n}\kappa^{\frac{d+1}{2}-2n-m}(1+t+\tau)^{-N},\\
\left|{\mathbb E}\left[q_-(\kappa+t)q_-(\kappa)\right]\right|& \lesssim(\kappa+t)^{\frac{d+1}{2}-2n}\kappa^{\frac{d+1}{2}-2n-m}(1+t)^{-N}.
\end{align*}
Consequently, for $W_{\kappa}=p_+(\kappa+\tau)-q_-(\kappa)$, it holds
\begin{align*}
W_{\kappa}W_{\kappa+t}&=p_+(\kappa+\tau)p_+(\kappa+t+\tau)-p_+(\kappa+\tau)q_-(\kappa+t)\\
&\quad -p_+(\kappa+t+\tau)q_-(\kappa)+q_-(\kappa+t)q_-(\kappa)
\end{align*}
such that
\begin{align*}
&\left|{\mathbb E}\left[\kappa^{\frac{m+4n-d-1}{2}}(\kappa+t)^{\frac{m+4n-d-1}{2}}W_{\kappa}W_{\kappa+t}\right]\right|\\
&\lesssim \kappa^{\frac{m+4n-d-1}{2}}(\kappa+t)^{\frac{m+4n-d-1}{2}}\Big[(\kappa+\tau)^{\frac{d+1}{2}-2n}(\kappa+t+\tau)^{\frac{d+1}{2}-2n-m}(1+t)^{-N}\\
&\quad +(\kappa+\tau)^{\frac{d+1}{2}-2n}(\kappa+t)^{\frac{d+1}{2}-2n-m}(1+|t-\tau|)^{-N}\\
&\quad +(\kappa+t+\tau)^{\frac{d+1}{2}-2n}\kappa^{\frac{d+1}{2}-2n-m}(1+t+\tau)^{-N}+(\kappa+t)^{\frac{d+1}{2}-2n}\kappa^{\frac{d+1}{2}-2n-m}(1+t)^{-N}\Big]\\
&=\kappa^{\frac m2}(\kappa+t)^{-\frac m2}\left(\frac{\kappa}{\kappa+\tau}\right)^{2n-\frac{d+1}2}\left(\frac{\kappa+t}{\kappa+t+\tau}\right)^{m+2n-\frac{d+1}2}(1+t)^{-N}\\
&\quad +\kappa^{\frac m2}(\kappa+t)^{-\frac m2}\left(\frac{\kappa}{\kappa+\tau}\right)^{2n-\frac{d+1}2}(1+|t-\tau|)^{-N}\\
&\quad +\kappa^{-\frac m2}(\kappa+t)^{\frac m2}\left(\frac{\kappa+t}{\kappa+t+\tau}\right)^{2n-\frac{d+1}2}(1+t+\tau)^{-N}
+\kappa^{-\frac m2}(\kappa+t)^{\frac m2}(1+t)^{-N}\\
&\lesssim \kappa^{\frac{m}{2}}(\kappa+t)^{-\frac{m}{2}}\left[(1+t)^{-N}+(1+|t-\tau|)^{-N}\right]
+\kappa^{-\frac{m}{2}}(\kappa+t)^{\frac{m}{2}}(1+t)^{-N}\\
&\lesssim \left(1+\frac{t}{\kappa}\right)^{\frac{|m|}2}\left[(1+t)^{-N}+(1+|t-\tau|)^{-N}\right],
\end{align*}
where we utilized the facts that $2n-\frac{d+1}2\ge0$ for any $n\ge2$, and $m+2n-\frac{d+1}2>0$ for $m\in(d-2n+1,d]$ as stated in Assumption \ref{as:rho}, and that
\[
\max\left\{\left(\frac{\kappa}{\kappa+t}\right)^{\frac m2},\left(\frac{\kappa+t}{\kappa}\right)^{\frac m2}\right\}=\left(1+\frac{t}{\kappa}\right)^{\frac{|m|}2}\quad\forall\,\kappa>1,\,t\ge0.
\]
This completes the proof of (\ref{eq:W}) for $W_{\kappa}\in \Theta_2$.

Combining the above estimate, we conclude that \eqref{eq:W} holds for all $W_{\kappa}\in \Theta$, thus completing the proof.
\end{proof}

\begin{lemma}\label{lm:X}
Consider $\{X_t\}_{t \ge 0}$ as a real-valued and centered stochastic process with continuous paths. Assume that there exist constants $\mu,\alpha\ge0,$ $\beta>0,$ and $\kappa_*>1$ such that 
\begin{equation}\label{eq:cond_X}
|\mathbb E[X_{\kappa}X_{\kappa+t}]|\lesssim\left(1+\frac{t}{\kappa}\right)^\alpha(1+|t-\mu|)^{-\beta}\quad\forall\,t\ge0,~\kappa>\kappa_*.
\end{equation}
Then, it holds that  
\begin{equation}\label{eq:result}
\lim_{Q\to\infty}\frac1Q\int_{Q}^{2Q}X_{\kappa}d\kappa=0\quad\mathbb P\text{-a.s}.
\end{equation}
\end{lemma}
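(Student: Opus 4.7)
The plan is to first establish $L^2$-convergence $\mathbb{E}[I_Q^2]\to 0$ with an explicit polynomial rate, then upgrade to almost-sure convergence along a geometric subsequence via Borel--Cantelli, and finally fill in the continuous parameter via a maximal-type argument. Set $I_Q:=\frac{1}{Q}\int_Q^{2Q}X_\kappa d\kappa$ throughout.

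For the first step, Fubini--Tonelli and the symmetry of the covariance in $(\kappa,\kappa')$ reduce the second moment to
\[
\mathbb{E}[I_Q^2]=\frac{2}{Q^2}\int_Q^{2Q}d\kappa\int_0^{2Q-\kappa}\mathbb{E}[X_\kappa X_{\kappa+t}]\,dt.
\]
Since $t/\kappa\le 1$ on the domain of integration, the assumption \eqref{eq:cond_X} yields $\mathbb{E}[I_Q^2]\lesssim \frac{1}{Q}\int_0^Q(1+|t-\mu|)^{-\beta}\,dt$. A three-case analysis on $\beta$ (namely $\beta>1$, $\beta=1$, and $0<\beta<1$) then produces a polynomial decay of the form $\mathbb{E}[I_Q^2]\lesssim Q^{-(\beta\wedge 1)}(\log Q)^{\mathbf{1}_{\{\beta=1\}}}$, which tends to zero as $Q\to\infty$.

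For the second step, I would pick a geometric subsequence $Q_n:=\lambda^n$ with $\lambda>1$. The decay from Step~1 ensures $\sum_n\mathbb{E}[I_{Q_n}^2]<\infty$, so Chebyshev's inequality combined with the Borel--Cantelli lemma yields $I_{Q_n}\to 0$ $\mathbb{P}$-a.s.

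For the third and most delicate step, I would interpolate to arbitrary $Q\in[Q_n,Q_{n+1}]$ via the decomposition $Q I_Q-Q_nI_{Q_n}=\int_{2Q_n}^{2Q}X_\kappa d\kappa-\int_{Q_n}^Q X_\kappa d\kappa$, which gives
\[
|I_Q-I_{Q_n}|\le (\lambda-1)|I_{Q_n}|+\frac{\Phi_n+\Psi_n}{Q_n},
\]
where $\Phi_n:=\sup_{Q'\in[Q_n,Q_{n+1}]}\bigl|\int_{Q_n}^{Q'}X_\kappa d\kappa\bigr|$ and $\Psi_n$ is the analogous supremum over $[2Q_n,2Q_{n+1}]$. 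The main obstacle is controlling these suprema, since the crude majorization by $\int|X_\kappa|d\kappa$ discards precisely the oscillation that \eqref{eq:cond_X} captures. I would instead discretize $[Q_n,Q_{n+1}]$ at a scale chosen so that applying the $L^2$-bound from Step~1 to each partial integral, and combining with a union bound, produces a summable estimate; a final Borel--Cantelli application then gives $(\Phi_n+\Psi_n)/Q_n\to 0$ a.s. The polynomial rate obtained in Step~1 is essential for the summability here, and together with the a.s. convergence along $\{Q_n\}$ this completes the proof of \eqref{eq:result}.
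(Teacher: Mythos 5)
Your Steps 1 and 2 are sound and essentially match the paper: the covariance hypothesis gives $\mathbb E|I_Q|^2\lesssim Q^{-(\beta\wedge 1)}$ (the paper simply reduces to $\beta\in(0,1)$ at the outset, since the hypothesis for larger $\beta$ implies it for a smaller exponent, and the factor $(1+t/\kappa)^\alpha$ is bounded by $2^\alpha$ on $[Q,2Q]^2$), and Chebyshev plus Borel--Cantelli yields a.s.\ convergence along any subsequence for which $\sum_n\mathbb E|I_{Q_n}|^2<\infty$. The divergence is in Step 3, and there your argument is not yet a proof. By choosing a geometric subsequence $Q_n=\lambda^n$ you make the gaps $Q_{n+1}-Q_n=(\lambda-1)Q_n$ comparable to $Q_n$, so all of the difficulty is pushed into controlling $(\Phi_n+\Psi_n)/Q_n$, and the proposal stops at ``discretize at a scale chosen so that the $L^2$ bound plus a union bound is summable.'' Two points are left unverified: (i) the mesh must satisfy a real constraint --- for $\beta\in(0,1)$ one has $\mathbb E\bigl|\int_{Q_n}^{Q'}X_\kappa d\kappa\bigr|^2\lesssim Q_n^{2-\beta}$, so with $N_n$ grid points the union bound gives a contribution of order $N_nQ_n^{-\beta}$ to $\mathbb E[\Phi_n^2]/Q_n^2$, which is summable only if the mesh $\delta_n$ satisfies roughly $Q_n^{1-\beta}\ll\delta_n\ll Q_n$ (e.g.\ $\delta_n=Q_n^{1-\beta/2}$ works); and (ii) the union bound controls only the grid points, so you still must bound the oscillation between grid points, e.g.\ by $\max_j\int_{t_j}^{t_{j+1}}|X_\kappa|d\kappa$, using the uniform bound $\sup_{\kappa>\kappa_*}\mathbb E X_\kappa^2\lesssim 1$ that follows from the hypothesis at $t=0$. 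With (i)--(ii) supplied your route does close, so the plan is completable, but as written the decisive step is asserted rather than proved.

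The paper sidesteps this machinery entirely by a different choice of subsequence: $Q_n=(n+1)^k$ with $k>1/\beta$. Then $\sum_nQ_n^{-\beta}<\infty$ still holds, while the gaps are \emph{relatively} small, $(Q_{n+1}-Q_n)/Q_n=O(1/n)$, so the crude bound $\sup_{Q_n\le Q<Q_{n+1}}\bigl|\frac1Q\int_{Q_n}^{Q}X_\kappa d\kappa\bigr|\le\frac1{Q_n}\int_{Q_n}^{Q_{n+1}}|X_\kappa|d\kappa$, combined with $\mathbb E|X_sX_\kappa|\lesssim 1$, already gives $\mathbb E|Y_n|^2\lesssim\bigl((Q_{n+1}-Q_n)/Q_n\bigr)^2\lesssim n^{-2}$, which is summable; no discretization, maximal inequality, or chaining is needed. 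So either switch to a polynomially growing subsequence, which collapses your Step 3 to two lines, or carry out points (i)--(ii) explicitly; as it stands, the missing idea is precisely the mechanism that makes the interpolation error summable.
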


\begin{proof}
Without loss of generality, we assume that $\beta\in(0,1)$; otherwise, if $\beta>1$, one can always find $\beta'\in(0,1)$ such that
\[
|\mathbb E[X_{\kappa}X_{\kappa+t}]|\lesssim(1+|t-\mu|)^{-\beta}\le(1+|t-\mu|)^{-\beta'}.
\]

For $Q$ being sufficiently large such that $Q>\kappa_*\vee(2\mu)$, it follows from the condition \eqref{eq:cond_X} that
\begin{align*}
\mathbb E\left|\frac1Q\int_Q^{2Q}X_\kappa d\kappa\right|^2
&=\frac1{Q^2}\int_Q^{2Q}\int_Q^{2Q}\mathbb E[X_sX_{\kappa}]dsd\kappa\\
&\lesssim \frac1{Q^2}\int_Q^{2Q}\int_Q^{2Q}\left(1+\frac{|\kappa-s|}{\kappa}\right)^\alpha(1+||\kappa-s|-\mu|)^{-\beta}dsd\kappa,
\end{align*}
where it holds for $\kappa,s\in[Q,2Q]$ that
\[
\left(1+\frac{|\kappa-s|}{\kappa}\right)^\alpha=\left\{
\begin{aligned}
&\left(2-\frac{s}{\kappa}\right)^\alpha\le\left(\frac32\right)^\alpha<2^\alpha,\quad&\kappa\ge s,\\
&\left(\frac{s}{\kappa}\right)^\alpha\le2^\alpha, \quad&\kappa<s.
\end{aligned}
\right.
\]
Hence, we obtain 
\begin{align*}
&\mathbb E\left|\frac1Q\int_Q^{2Q}X_\kappa d\kappa\right|^2\\
&\lesssim\frac1{Q^2}\int_Q^{2Q}\int_Q^{2Q}(1+||\kappa-s|-\mu|)^{-\beta}dsd\kappa\\
&=\frac1{Q^2}\int_Q^{2Q}\bigg[\bigg(\int_{Q}^{(\kappa-\mu)\vee Q}+\int_{(\kappa-\mu)\vee Q}^{\kappa}+\int_{\kappa}^{(\kappa+\mu)\wedge 2Q}\\
&\quad+\int_{(\kappa+\mu)\wedge 2Q}^{2Q}\bigg)(1+||\kappa-s|-\mu|)^{-\beta}ds\bigg]d\kappa\\
&=\frac1{(1-\beta)Q^2}\left(\int_Q^{Q+\mu}+\int_{Q+\mu}^{2Q-\mu}+\int_{2Q-\mu}^{2Q}\right)\bigg[(1+\kappa-\mu-Q)^{1-\beta}\\
&\quad-(1+\kappa-\mu-(\kappa-\mu)\vee Q)^{1-\beta}+2(1+\mu)^{1-\beta}\\
&\quad-(1-\kappa+\mu+(\kappa-\mu)\vee Q)^{1-\beta}-(1+\kappa+\mu-(\kappa+\mu)\wedge2Q)^{1-\beta}\\
&\quad+(1-\kappa-\mu+2Q)^{1-\beta}-(1-\kappa-\mu+(\kappa+\mu)\wedge2Q)^{1-\beta}\bigg]d\kappa\\
&=\frac{2}{(1-\beta)Q^2}\left[Q(1+\mu)^{1-\beta}-2Q+2\mu+\frac{(1+Q-\mu)^{2-\beta}-(1+\mu)^{2-\beta}}{2-\beta}\right]\\
&\lesssim \frac1{Q^{\beta}}.
\end{align*}

First, we show the convergence of the result \eqref{eq:result} at discrete points $\{Q_n\}_{n\in\mathbb N}$. Define 
$Q_n:=(n+1)^k$ and $\xi_n:=\frac1{Q_n}\int_{Q_n}^{2Q_n}X_{\kappa}d\kappa$ for any $n\in\mathbb N_+$ with the constant $k$ being large enough such that $k>\frac1{\beta}$ and $Q_n>\kappa_*\vee(2\mu)$ for any $n\in\mathbb N_+$. It can be verified that 
\begin{align*}
\sum_{n=1}^\infty\mathbb E|\xi_n|^2
=\sum_{n=1}^\infty\mathbb E\left|\frac1{Q_n}\int_{Q_n}^{2Q_n}X_{\kappa}d\kappa\right|^2
\lesssim\sum_{n=1}^\infty\frac1{(n+1)^{\beta k}}<\infty,
\end{align*}
which, together with the Borel--Cantelli lemma, implies
\begin{equation}\label{eq:xi}
\lim_{n\to\infty}\xi_n=0\quad\mathbb P\text{-a.s.}
\end{equation}

Next, we show the convergence of \eqref{eq:result} for arbitrary $Q\to\infty$. Define an auxiliary random variable
\[
Y_n:=\sup_{Q_n\le Q<Q_{n+1}}\left|\frac1Q\int_Q^{2Q}X_{\kappa}d\kappa-\xi_n\right|,
\]
which satisfies
\begin{align*}
\mathbb E|Y_n|^2&=\mathbb E\left[\sup_{Q_n\le Q<Q_{n+1}}\left|\frac1Q\int_Q^{2Q}X_{\kappa}d\kappa-\frac1{Q_n}\int_{Q_n}^{2Q_n}X_{\kappa}d\kappa\right|^2\right]\\
&=\mathbb E\left[\sup_{Q_n\le Q<Q_{n+1}}\left|\frac1Q\left(\int_{Q_n}^{2Q_n}-\int_{Q_n}^Q+\int_{2Q_n}^{2Q}\right)X_\kappa d\kappa-\frac1{Q_n}\int_{Q_n}^{2Q_n}X_\kappa d\kappa\right|^2\right]\\
&\le 2\mathbb E\Bigg[\sup_{Q_n\le Q<Q_{n+1}}\left(\frac1Q-\frac1{Q_n}\right)^2\left|\int_{Q_n}^{2Q_n}X_\kappa d\kappa\right|^2\\
&\quad +\sup_{Q_n\le Q<Q_{n+1}}\left|\frac1Q\int_{Q_n}^QX_{\kappa}d\kappa\right|^2+\sup_{Q_n\le Q<Q_{n+1}}\left|\frac1Q\int_{2Q_n}^{2Q}X_{\kappa}d\kappa\right|^2\Bigg]\\
&\lesssim \left(\frac{Q_{n+1}-Q_n}{Q_n^2}\right)^2\int_{Q_n}^{2Q_n}\int_{Q_n}^{2Q_n}\mathbb E[X_sX_\kappa]dsd\kappa+\frac1{Q_n^2}\int_{Q_n}^{Q_{n+1}}\int_{Q_n}^{Q_{n+1}}\mathbb E|X_sX_\kappa|dsd\kappa\\
&\quad +\frac1{Q_n^2}\int_{2Q_n}^{2Q_{n+1}}\int_{2Q_n}^{2Q_{n+1}}\mathbb E|X_sX_\kappa|dsd\kappa\\
&\lesssim\left(\frac{Q_{n+1}-Q_n}{Q_n^2}\right)^2=\left(\frac{(n+2)^k-(n+1)^k}{(n+1)^k}\right)^2\lesssim\frac1{n^2},
\end{align*}
where we used the fact that 
$
\mathbb E|X_sX_\kappa|\le\left(\mathbb E|X_s|^2\right)^{\frac12}\left(\mathbb E|X_\kappa|^2\right)^{\frac12}\lesssim1
$
for $s,\kappa\ge Q_n>\kappa_*$ based on the condition \eqref{eq:cond_X} with $t=0$. Hence, we get
\[
\sum_{n=1}^\infty\mathbb E|Y_n|^2\lesssim\sum_{n=1}^\infty\frac1{n^2}<\infty,
\]
which indicates 
$\lim_{n\to\infty}Y_n=0$ almost surely
according to the Borel--Cantelli lemma again. The proof is completed by combining the convergence of $\{\xi_n\}_{n\in\mathbb N_+}$ given in \eqref{eq:xi}.
\end{proof}

\subsection{Analysis of the residual term}

This subsection demonstrates that the contribution of the residual term $b^{\infty}$, defined in \eqref{eq:b}, is negligible to the inverse scattering problem by utilizing the decay property of the integral operator $\mathcal K_\kappa$.

\begin{theorem}\label{tm:b}
Let the random potential $\rho$ satisfy Assumption \ref{as:rho} with the additional condition $m>\frac{4d-4n+2}3$, 
then it holds for all $\hat{x},\theta\in {\mathbb S^2}$ that
\begin{equation*}
\lim_{Q\to\infty}\frac{1}{Q}\int_Q^{2Q}\kappa^{m+4n-d-1}\left|b^{\infty}(\hat{x},\theta,\kappa)\right|^2d\kappa=0\quad\mathbb P\text{-a.s}.
\end{equation*}
\end{theorem}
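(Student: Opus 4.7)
My plan is to first derive a $\mathbb P$-a.s.\ pathwise decay bound of the form $\kappa^{m+4n-d-1}|b^\infty(\hat x,\theta,\kappa)|^2\lesssim\kappa^{-\delta}$ for some $\delta>0$, from which the claimed Ces\`aro-type conclusion follows immediately since $\tfrac{1}{Q}\int_Q^{2Q}\kappa^{-\delta}d\kappa\lesssim Q^{-\delta}\to 0$. The starting point is an explicit integral representation for each Born iterate, obtained by the same reasoning that produced \eqref{eq:u1inf}: substituting the far-field asymptotics \eqref{eq:Ginf} of $G$ into $u_j=\mathcal K_\kappa u_{j-1}$ yields
\[
u_j^\infty(\hat x,\theta,\kappa)=\frac{C_d}{n}\kappa^{-(2n-\frac{d+1}{2})}\int_D e^{-{\rm i}\kappa\hat x\cdot y}\rho(y)u_{j-1}(y,\theta,\kappa)dy,\quad j\ge1.
\]

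For $j\ge2$ I would view this integral as the duality pairing of $\rho$, regarded as an element of $H^{-s}(\mathbb R^d)$ supported in $D$, with $\chi u_{j-1}e^{-{\rm i}\kappa\hat x\cdot\cdot}$, where $\chi\in C_0^\infty(B)$ equals $1$ on a neighborhood of $D$. Fix any $s\in(\tfrac{d-m}{2},\tfrac{2n-1}{2})$; Lemma \ref{lm:rho} makes $\|\rho\|_{H^{-s}(\mathbb R^d)}$ $\mathbb P$-a.s.\ finite, while Peetre's inequality applied on the Fourier side gives $\|\chi u_{j-1}e^{-{\rm i}\kappa\hat x\cdot\cdot}\|_{H^s(\mathbb R^d)}\lesssim\kappa^s\|u_{j-1}\|_{H^s(B)}$. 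Iterating Lemma \ref{lm:K} (ii) on $u_{j-1}=\mathcal K_\kappa^{j-1}u^{\rm i}$ together with the elementary bound $\|u^{\rm i}\|_{H^s(B)}\lesssim\kappa^s$ produces $\|u_{j-1}\|_{H^s(B)}\lesssim\kappa^{(j-1)(2s-2n+1)+s}$, and assembling the three estimates yields
\[
|u_j^\infty(\hat x,\theta,\kappa)|\lesssim\kappa^{-2n+\frac{d+1}{2}+2s+(j-1)(2s-2n+1)},\quad j\ge 2.
\]
Since $2s-2n+1<0$, the tail $\sum_{j\ge2}|u_j^\infty|$ converges geometrically for $\kappa$ large, giving $|b^\infty|\lesssim\kappa^{-4n+\frac{d+3}{2}+4s}$ and therefore $\kappa^{m+4n-d-1}|b^\infty|^2\lesssim\kappa^{m-4n+2+8s}$.

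It remains to choose $s$ so that the exponent $m-4n+2+8s$ is strictly negative while $s$ stays inside $(\tfrac{d-m}{2},\tfrac{2n-1}{2})$. Elementary algebra reduces this to the nonemptiness of the interval $(\tfrac{d-m}{2},\tfrac{4n-2-m}{8})$ (the competing upper bound $\tfrac{4n-2-m}{8}<\tfrac{2n-1}{2}$ being automatic under Assumption \ref{as:rho}), and this interval is nonempty if and only if $m>\tfrac{4d-4n+2}{3}$, which is precisely the extra hypothesis of the theorem. Such an $s$ therefore exists, yielding the required $\delta>0$ and completing the argument. The main technical obstacle lies in the second step: realizing the duality pairing and the plane-wave product estimate rigorously for a fractional index $s$ with sharp $\kappa$-dependence, and carefully tracking the interplay between the negative regularity of $\rho$, the decay rate of $\mathcal K_\kappa$ on $H^s(B)$, and the $\kappa^s$ loss coming from multiplication by the plane wave, since any slack in these bounds would tighten the admissible range of $s$ and push the threshold on $m$ above $(4d-4n+2)/3$.
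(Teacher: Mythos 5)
Your proposal is correct and follows essentially the same route as the paper: represent each $u_j^\infty$ via the far-field asymptotics of $G$, pair $\rho\in W^{-s,p}$ (you take $p=2$) against the cutoff product with the plane wave to incur a $\kappa^{s}$ loss, iterate the $H^s$ decay $\|\mathcal K_\kappa\|\lesssim\kappa^{2s-2n+1}$ to sum the tail, and then choose $s\in(\tfrac{d-m}{2},\tfrac{4n-m-2}{8})$, whose nonemptiness is exactly $m>\tfrac{4d-4n+2}{3}$. The only differences (bounding the Born terms individually rather than the whole tail at once, and using Peetre's inequality for the plane-wave modulation instead of the product rule of \cite[Lemma 3.6]{CHL19}) are cosmetic and yield the same exponents.
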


\begin{proof}
From the definition of the residual in (\ref{eq:b}) and the asymptotic expansion of the fundamental solution given in (\ref{eq:Ginf}), we obtain
\begin{equation*}
b^{\infty}(\hat{x},\theta,\kappa)=\frac{C_d}{n}\kappa^{-(2n-\frac{d+1}{2})}\sum_{j=2}^{\infty}\int_{\mathbb R^d}e^{-{\rm i}\kappa\hat{x}\cdot y}\rho(y)u_{j-1}(y,\theta,\kappa)dy.
\end{equation*}
Let $\chi\in C_0^{\infty}(\mathbb R^d)$ be a cutoff function with support in a bounded domain $U$ such that $D\subset U$ and $\chi(y)=1$ if $y\in D$. For any $s\in\left(\frac{d-m}{2},\frac{2n-1}{2}\right)$, $p\geq \frac{d}{s}\vee 1$, and $p'$ satisfying $1/p+1/p'=1$, it follows from Lemma \ref{lm:rho} that $\rho\in W^{-s,p}(\mathbb R^d)$. We deduce that 
\begin{align*}
\left|b^{\infty}(\hat{x},\theta,\kappa)\right|&=\left|\frac{1}{n}C_d\kappa^{\frac{d+1}{2}-2n}\sum_{j=2}^{\infty}\int_{\mathbb R^d}\chi(y)e^{-{\rm i}\kappa\hat{x}\cdot y}\rho(y)u_{j-1}(y,\theta,\kappa)\chi(y)dy\right|\notag\\
&\lesssim\kappa^{\frac{d+1}{2}-2n}\|\rho\|_{W^{-s,p}(\mathbb R^d)}\|\chi u_0(-\hat x,\cdot,\kappa)\sum_{j=2}^{\infty}u_{j-1}(\cdot,\theta,\kappa)\|_{W^{s,p'}(\mathbb R^d)}\notag\\
&\lesssim\kappa^{\frac{d+1}{2}-2n}\|\chi u_0(-\hat x,\cdot,\kappa)\|_{H^s(\mathbb R^d)}\|\chi\sum_{j=2}^{\infty}u_{j-1}(\cdot,\theta,\kappa)\|_{H^s(\mathbb R^d)}\notag\\
&\lesssim\kappa^{\frac{d+1}{2}-2n}\kappa^s\sum_{j=2}^{\infty}\|\mathcal K_{\kappa}^{j-1}u_0(\cdot,\theta,\kappa)\|_{H^s(U)}\notag\\
&\lesssim\kappa^{\frac{d+1}{2}-2n+s}\sum_{j=2}^{\infty}\|\mathcal K_\kappa\|^{j-1}_{\mathcal L(H^s(U))}\|u_0(\cdot,\theta,\kappa)\|_{H^s(U)}\notag\\
&\lesssim\kappa^{4s-4n+\frac{d+3}2},
\end{align*}
where we used the product rule \cite[Lemma 3.6]{CHL19}, the decay property
\[
\|\mathcal K_\kappa\|_{\mathcal L(H^s(U))}\lesssim \kappa^{2s-2n+1}\quad\mathbb P\text{-a.s.}
\]
for $s\in(\frac{d-m}2,\frac{2n-1}2)$ provided in Lemma \ref{lm:K}, and the fact that $\|u_0(\cdot,\theta,\kappa)\|_{H^s(U)}\lesssim \kappa^s$ (cf. \cite{CHL19,LLW23}). A straightforward calculation yields 
\begin{eqnarray*}
\frac{1}{Q}\int_Q^{2Q}\kappa^{m+4n-d-1}\left|b^{\infty}(\hat{x},\theta,\kappa)\right|^2d\kappa\lesssim\frac{1}{Q}\int_Q^{2Q}\kappa^{m-4n+8s+2}d\kappa
\lesssim Q^{m-4n+8s+2},
\end{eqnarray*}
which converges to zero as $Q\to\infty$ if there exists some $s\in(\frac{d-m}2,\frac{2n-1}2)$ such that $m-4n+8s+2<0$, i.e., $s<\frac{4n-m-2}8$. Indeed, such a constant $s$ exists since
\[
\frac{d-m}2<\frac{4n-m-2}8
\]
provided that $m>\frac{4d-4n+2}3$, which completes the proof.
\end{proof}

\subsection{Uniqueness of the inverse problem}

With the analysis of each terms involved in the far-field pattern $u^\infty$, we are now in a position to present the main result of the paper. This result enables the unique recovery of the microlocal strengths $a_\rho^c$ and $a_\rho^r$ of the covariance and relation operators of the random potential, respectively, from a single realization of the backscattering far-field data and by calculating the Fourier transforms of $a_\rho^c$ and $a_\rho^r$.

\begin{theorem}\label{tm:main}
Let $n\ge2$, $d=2,3,$ and $\rho$ be a random potential satisfying Assumption \ref{as:rho} with 
\begin{equation}\label{eq:cond}
m\in\left(\frac{4d-4n+2}3,d\right].
\end{equation}
For any fixed $\tau\geq 0$ and all $\hat{x}\in{\mathbb S}^{d-1},$ it holds that 
\begin{align}\label{eq:main1}
\lim_{Q\to\infty}\frac{1}{Q}\int_Q^{2Q}\kappa^{m+4n-d-1}u^{\infty}(\hat{x},-\hat{x},\kappa+\tau)\overline{u^{\infty}(\hat{x},-\hat{x},\kappa)}d\kappa =&\frac{|C_d|^2}{n^22^{m}}\widehat{a^c_{\rho}}(2\tau\hat{x})\quad\mathbb P\text{-a.s.},\\\label{eq:main2}
\lim_{Q\to\infty}\frac{1}{Q}\int_Q^{2Q}\kappa^{m+4n-d-1}u^{\infty}(\hat{x},-\hat{x},\kappa+\tau)u^{\infty}(-\hat{x},\hat{x},\kappa)d\kappa =& \frac{C_d^2}{n^22^{m}}\widehat{a^r_{\rho}}(2\tau\hat{x})\quad\mathbb P\text{-a.s.}, 
\end{align}
where $C_d$ is defined in \eqref{eq:Cd}. Moreover, $a^c_{\rho}$ and $a^r_{\rho}$ can be uniquely determined by \eqref{eq:main1} and \eqref{eq:main2}, respectively, with some fixed $\tau_0>0$ and an infinite number of distinct directions $\{\hat{x}_j\}_{j\in\mathbb N}\subset{\mathbb S}^{d-1}$, or with some fixed $\hat{x}_0\in {\mathbb S}^{d-1}$ and an infinite number of distinct increments $\{\tau_j\}_{j\in\mathbb N}\subset [\tau_{\rm min},\tau_{\rm max}]$ with $0<\tau_{\rm min}<\tau_{\rm max}$.
\end{theorem}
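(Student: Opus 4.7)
The plan is to reduce the identities \eqref{eq:main1}--\eqref{eq:main2} for the full far-field pattern $u^\infty$ to the leading-term identities in Theorem~\ref{tm:u1}, by controlling the Born residual $b^\infty$ via Theorem~\ref{tm:b}, and then to extract uniqueness from the resulting Fourier values using the Paley--Wiener theorem and real-analyticity.

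Using $u^\infty=u_1^\infty+b^\infty$, I expand $u^\infty(\hat x,-\hat x,\kappa+\tau)\overline{u^\infty(\hat x,-\hat x,\kappa)}$ into the principal product $u_1^\infty(\kappa+\tau)\overline{u_1^\infty(\kappa)}$, two cross terms, and the residual--residual product $b^\infty(\kappa+\tau)\overline{b^\infty(\kappa)}$. The principal term yields the right-hand side of \eqref{eq:main1} directly by Theorem~\ref{tm:u1}. For each cross term I apply the Cauchy--Schwarz inequality to the weighted average $\frac1Q\int_Q^{2Q}\kappa^{m+4n-d-1}(\cdot)\,d\kappa$, producing the bound
\[
\Bigl(\tfrac1Q\int_Q^{2Q}\kappa^{m+4n-d-1}|u_1^\infty(\kappa+\tau)|^2 d\kappa\Bigr)^{1/2}\Bigl(\tfrac1Q\int_Q^{2Q}\kappa^{m+4n-d-1}|b^\infty(\kappa)|^2 d\kappa\Bigr)^{1/2}.
\]
The first factor is almost surely bounded in $Q$: applying Theorem~\ref{tm:u1} with $\tau=0$ to the unshifted version yields almost-sure convergence (hence boundedness), and a change of variable together with $(\kappa-\tau)^{m+4n-d-1}/\kappa^{m+4n-d-1}\to 1$ uniformly on $[Q+\tau,2Q+\tau]$ transfers the bound to the shifted integrand. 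The second factor tends to zero almost surely by Theorem~\ref{tm:b}, so the cross term vanishes; the symmetric cross term is handled identically, and the $b^\infty\overline{b^\infty}$ contribution vanishes directly by Theorem~\ref{tm:b}. This establishes \eqref{eq:main1}; \eqref{eq:main2} follows from the same decomposition with $\overline{u^\infty(\hat x,-\hat x,\kappa)}$ replaced by $u^\infty(-\hat x,\hat x,\kappa)$, invoking the second identity of Theorem~\ref{tm:u1}.

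For uniqueness, since $a_\rho^\eta\in C_0^\infty(D)$, the Paley--Wiener theorem extends $\widehat{a_\rho^\eta}$ to an entire function of exponential type on $\mathbb C^d$, so in particular it is real-analytic on $\mathbb R^d$. In scenario~(a), the identity \eqref{eq:main1} supplies the values $\widehat{a_\rho^\eta}(2\tau_0\hat x_j)$ at infinitely many distinct $\hat x_j$ on the compact connected sphere of radius $2\tau_0$; these points must have an accumulation point, so the identity theorem for the real-analytic restriction of $\widehat{a_\rho^\eta}$ to that sphere fixes the restriction there. In scenario~(b) the data supplies $\widehat{a_\rho^\eta}(2\tau_j\hat x_0)$ for $\{\tau_j\}$ accumulating in $[\tau_{\rm min},\tau_{\rm max}]$, and the one-dimensional identity theorem fixes the real-analytic function $t\mapsto\widehat{a_\rho^\eta}(t\hat x_0)$ on all of $\mathbb R$. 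In both scenarios, analytic continuation of the Paley--Wiener extension propagates the determination to $\mathbb R^d$, and inverting the Fourier transform recovers $a_\rho^\eta$. The principal technical obstacle is the almost-sure, uniform-in-$Q$ bound on the shifted leading-order average needed to legitimise the Cauchy--Schwarz step; a secondary delicate point is the propagation from sphere or ray Fourier data to the full function via analytic continuation, which I model on the arguments in \cite{LLW23}.
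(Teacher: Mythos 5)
Your proposal follows essentially the same route as the paper: decompose $u^\infty=u_1^\infty+b^\infty$, take the leading contribution from Theorem~\ref{tm:u1}, eliminate the cross and residual terms via the Cauchy--Schwarz inequality combined with Theorem~\ref{tm:b}, and deduce uniqueness from analyticity of $\widehat{a_\rho^c}$ and $\widehat{a_\rho^r}$ (compact support of the strengths) evaluated along the sequences $2\tau_0\hat x_j$ or $2\tau_j\hat x_0$, which have accumulation points. The only substantive difference is that you explicitly justify the almost-sure boundedness of the shifted leading-order average $\frac1Q\int_Q^{2Q}\kappa^{m+4n-d-1}|u_1^\infty(\hat x,-\hat x,\kappa+\tau)|^2d\kappa$ needed for the Cauchy--Schwarz step, a detail the paper leaves implicit; this is a correct and harmless refinement.
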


\begin{proof}
For simplicity, we use notations $v_1=u_1^\infty$ and $v_2=b^\infty$. The far-field pattern $u^{\infty}(\hat{x},\theta,\kappa)$ can be written as 
\begin{equation*}
u^{\infty}(\hat{x},\theta,\kappa)=v_1(\hat{x}, \theta, \kappa)+v_2(\hat{x}, \theta, \kappa),
\end{equation*}
which leads to
\begin{align*}
&\frac{1}{Q}\int_Q^{2Q}\kappa^{m+4n-d-1}u^{\infty}(\hat{x},-\hat{x},\kappa+\tau)\overline{u^{\infty}(\hat{x},-\hat{x},\kappa)}d\kappa\\
&=\sum_{i,j=1}^2\frac{1}{Q}\int_Q^{2Q}\kappa^{m+4n-d-1}v_i(\hat{x},-\hat{x},\kappa+\tau)\overline{v_j(\hat{x},-\hat{x},\kappa)}d\kappa
=:\sum_{i,j=1}^2I_{i,j}
\end{align*}
and
\begin{align*}
&\frac{1}{Q}\int_Q^{2Q}\kappa^{m+4n-d-1}u^{\infty}(\hat{x},-\hat{x},\kappa+\tau)u^{\infty}(-\hat{x},\hat{x},\kappa)d\kappa\\
&=\sum_{i,j=1}^2\frac{1}{Q}\int_Q^{2Q}\kappa^{m+4n-d-1}v_i(\hat{x},-\hat{x},\kappa+\tau)v_j(-\hat{x},\hat{x},\kappa)d\kappa
=:\sum_{i,j=1}^2J_{i,j}.
\end{align*}

It follows from Theorem \ref{tm:u1} that 
\begin{align*}
\lim_{Q\to\infty}I_{1,1}=\frac{|C_d|^2}{n^22^m}\widehat{a_{\rho}^c}(2\tau\hat x),\quad 
\lim_{Q\to\infty}J_{1,1}=\frac{C_d^2}{n^22^m}\widehat{a_{\rho}^r}(2\tau\hat x)\quad\mathbb P\text{-a.s.}
\end{align*}
For the other terms $I_{i,j}$ and $J_{i,j}$ with $(i,j)\in\{(i,j):i\neq1\,\text{or}\,j\neq1\}$, we have from Theorem \ref{tm:b} that 
\begin{align*}
\lim_{Q\to\infty}I_{i,j}&\le\lim_{Q\to\infty}\Bigg\{\left[\frac{1}{Q}\int_Q^{2Q}\kappa^{m+4n-d-1}\left|v_i(\hat{x},-\hat{x},\kappa+\tau)\right|^2d\kappa\right]^{\frac{1}{2}}\\
&\quad \times\left[\frac{1}{Q}\int_Q^{2Q}\kappa^{m+4n-d-1}\left|v_j(\hat{x},-\hat{x},\kappa)\right|^2d\kappa\right]^{\frac{1}{2}}\Bigg\}
=0,\\
\lim_{Q\to\infty}J_{i,j}&\le \lim_{Q\to\infty}\Bigg\{\left[\frac{1}{Q}\int_Q^{2Q}\kappa^{m+4n-d-1}\left|v_i(\hat{x},-\hat{x},\kappa+\tau)\right|^2d\kappa\right]^{\frac{1}{2}}\\
&\quad \times\left[\frac{1}{Q}\int_Q^{2Q}\kappa^{m+4n-d-1}\left|v_j(-\hat{x},\hat{x},\kappa)\right|^2d\kappa\right]^{\frac{1}{2}}\Bigg\}
=0. 
\end{align*}
We then conclude that 
\begin{equation*}
\begin{aligned}
\lim_{Q\to\infty}\frac{1}{Q}\int_Q^{2Q}\kappa^{m+4n-d-1}u^{\infty}(\hat{x},-\hat{x},\kappa+\tau)\overline{u_1^{\infty}(\hat{x},-\hat{x},\kappa)}d\kappa &=\frac{|C_d|^2}{n^22^{m}}\widehat{a^c_{\rho}}(2\tau\hat{x})\quad\mathbb P\text{-a.s.},\\
\lim_{Q\to\infty}\frac{1}{Q}\int_Q^{2Q}\kappa^{m+4n-d-1}u^{\infty}(\hat{x},-\hat{x},\kappa+\tau)u_1^{\infty}(-\hat{x},\hat{x},\kappa)d\kappa &= \frac{C_d^2}{n^22^{m}}\widehat{a^r_{\rho}}(2\tau\hat{x})\quad\mathbb P\text{-a.s.}
\end{aligned}
\end{equation*}

Since $a_{\rho}^c,a_{\rho}^r\in C_0^{\infty}(D)$, their Fourier transforms $\widehat{a_{\rho}^c}$ and $\widehat{a_{\rho}^r}$ are analytic and can thus be uniquely determined by their values at a countable sequence $\{\xi_j\}_{j\in\mathbb N}$ with an accumulation point (cf. \cite[Chapter 4, Section 3.2]{A78}). 
Moreover, due to the compactness of the finite dimensional unit sphere ${\mathbb S}^{d-1}$ and the closed interval $[\tau_{\min},\tau_{\max}]$, the sequence $\{2\tau\hat{x}\}$ has an accumulation point for some fixed $\tau_0>0$ and an infinite number of distinct directions $\{\hat{x}_j\}_{j\in\mathbb N}\subset{\mathbb S}^{d-1}$, or for some fixed $\hat{x}_0\in{\mathbb S}^{d-1}$ and an infinite number of increments $\{\tau_j\}_{j\in\mathbb N}\subset [\tau_{\min},\tau_{\max}]$. This completes the proof of the uniqueness for determining the strengths $a_\rho^c$ and $a_{\rho}^r$.
\end{proof}

\begin{remark}\label{rk:cond}
It is worth noting that for $n=1$, the condition \eqref{eq:cond} turns to be $m\in\emptyset$ for $d=2,3$. 
It coincides with the acoustic wave equation (cf. \cite{CHL19,LPS08}) and the elastic wave equation (cf. \cite{LLW22a,LLW23}) that (i) for the two-dimensional case, the near-field data observed in an open domain will be used instead of the far-field data to uniquely determine the strength of the random potential; (ii) for the three-dimensional case, the second term $u_2^\infty$ defined in the Born sequence need to be estimated separately from the residual \eqref{eq:b} to get a sharper condition on $m$.
\end{remark}

\begin{remark}
If, in particular, $m_1=m_2=m$, the strengths $a_{\rho_1}^c$ and $a_{\rho_2}^c$ can be reconstructed for the real and imaginary parts of the random potential, respectively, as indicated in Remark \ref{rk:strength}.
\end{remark}

\section{Conclusion}\label{sec:con}

In this paper, we have studied the well-posedness of the direct scattering problem and the uniqueness of the inverse random potential scattering problem for the stochastic polyharmonic wave equation in both two and three dimensions. Here, the random potential is assumed to be a centered and complex-valued GMIG random field. We have demonstrated that the direct scattering problem admits a unique solution in the distributional sense for sufficiently large wavenumbers. Additionally, we have shown that a single realization of the far-field patterns is sufficient to uniquely determine the microlocal strengths of both the covariance and relation operators of the random potential.

The present paper addresses the two- and three-dimensional polyharmonic wave equations within a unified framework. This is facilitated by two key observations: (i) the far-field pattern of the first term $u_1$ in the Born series can be uniformly treated, as the fundamental solutions for both cases exhibit a unified asymptotic expansion, and (ii) the far-field patterns of the other terms $u_j$, $j\ge2$, can be considered as higher order terms whose contribution to the reconstruction is negligible under a more demanding condition \eqref{eq:cond} on the parameter $m$ than the one in the direct scattering problem.

To demonstrate the well-posedness of the direct scattering problem, the wavenumber is assumed to be sufficiently large to ensure that the operator $\mathcal{K}_\kappa$ is a contraction map as it appears in the Lippmann--Schwinger equation. It is interesting to prove that the direct scattering problem admits a unique solution for any wavenumber. This requires new techniques to demonstrate that unique continuation holds for any wavenumber. In addition, it is unclear whether the condition \eqref{eq:cond} for the inverse random potential scattering problem is optimal. As mentioned in Remark \ref{rk:cond}, the condition on $m$ could potentially be further weakened if more refined estimates were available for the other terms $u_j$, $j\ge2$, instead of treating them uniformly as a residual. However, obtaining refined estimates for these terms is intricate, as the random potential $\rho$ becomes much less regular if $m$ is smaller than assumed in Theorem \ref{tm:main}.

A potential approach to obtaining a sharper condition on $m$ is to utilize other types of measurements, such as the near-field data excited by point sources. In this scenario, the two- and three-dimensional wave equations are typically treated separately due to the distinct expressions of the fundamental solution. Moreover, the singularity of the fundamental solution present in the near-field data, as well as the incident wave, will make both the direct and inverse scattering problems more complex. Progress on these aspects will be reported elsewhere in the future.

\end{document}